\newcommand{\R}{{\mathbb{R}}}
\newcommand{\ntx}{{\nabla_{\!t,x}}}
\newcommand{\eps}{{\varepsilon}}
\DeclareMathOperator*{\supp}{supp}
\newcommand{\sqrtDelta}{|\nabla|}  
\newcommand{\qtq}[1]{\quad\text{#1}\quad}
\newcommand{\attn}[1]{\marginpar{\footnotesize\raggedright\cite{ATTN}\kern -2.2ex\vrule width 2ex depth 0.5ex height 2ex\relax\,#1}}
\newtheorem{theorem}{Theorem}[section]
\newtheorem{lemma}[theorem]{Lemma}
\newtheorem{corollary}[theorem]{Corollary}
\newtheorem{proposition}[theorem]{Proposition}
\theoremstyle{definition}
\newtheorem{definition}[theorem]{Definition}
\theoremstyle{remark}
\newtheorem*{remarks}{Remarks}
\newcounter{smalllist}
\numberwithin{equation}{section}
\begin{document}

\title[The radial defocusing energy-supercritical NLW]{The radial defocusing energy-supercritical nonlinear wave equation
in all space dimensions}
\author{Rowan Killip}
\address{University of California, Los Angeles}
\author{Monica Visan}
\address{University of California, Los Angeles}

\begin{abstract}
We consider the defocusing nonlinear wave equation $u_{tt}-\Delta u + |u|^p u=0$ with spherically-symmetric initial data in the regime $\frac4{d-2}<p<\frac4{d-3}$ (which is energy-supercritical) and dimensions $3\leq d\leq 6$; we also consider $d\geq 7$, but for a smaller range of $p>\frac4{d-2}$.  The principal result is that blowup (or failure to scatter)
must be accompanied by blowup of the critical Sobolev norm.  An equivalent formulation is that maximal-lifespan solutions with bounded critical Sobolev norm are global and scatter.
\end{abstract}

\date{\today}

\maketitle


%
%
%
%

\section{Introduction}

We consider the initial value problem for the defocusing nonlinear wave equation in $d\geq3$ space dimensions:
\begin{equation}\label{nlw}
\begin{cases}
\ u_{tt} - \Delta u + F(u) = 0\\
\ u(0)=u_0,  \ u_t(0)=u_1,
\end{cases}
\end{equation}
where the nonlinearity $F(u)=|u|^p u$ is energy-supercritical, that is, $p>\frac{4}{d-2}$.

The class of solutions to \eqref{nlw} is left invariant by the scaling
\begin{equation}\label{scaling}
u(t,x)\mapsto \lambda^\frac2{p} u(\lambda t,\lambda x),
\end{equation}
which determines the \emph{critical} Sobolev space for initial data, namely, $(u_0,u_1)\in\dot H^{s_c}_x\times\dot H^{s_c-1}_x$
where the \emph{critical regularity} is $s_c:=\frac d2-\frac 2p$.  Notice that $p>\frac{4}{d-2}$ precisely corresponds to $s_c>1$; since the energy
\begin{equation}\label{energy}
E(u) = \int_{\R^d} \tfrac12|u_t|^2 + \tfrac12|\nabla u|^2 + \tfrac{1}{p+2}|u|^{p+2} \, dx
\end{equation}
scales like $s=1$, this regime is known as \emph{energy-supercritical}.

Let us start by making the notion of a solution more precise.

\begin{definition}[Solution]\label{D:solution}
A function $u: I \times \R^d \to \R$ on an open  time interval $0\in I \subset \R$ is a \emph{(strong) solution} to
\eqref{nlw} if $(u,u_t)\in C^0_t (K ;\dot H^{s_c}_x\times\dot H^{s_c-1}_x)$ and $u\in L_{t,x}^{(d+1)p/2}(K \times \R^d)$ for
all compact $K \subset I$, and obeys the Duhamel formula
\begin{equation}\label{old duhamel}
\begin{aligned}
\begin{bmatrix} u(t)\\[1ex] u_t(t)\end{bmatrix}
&= \begin{bmatrix} \cos(t\sqrtDelta) &  \sqrtDelta^{-1} \sin(t\sqrtDelta) \\[1ex]
            -\sqrtDelta\sin(t\sqrtDelta) & \cos(t\sqrtDelta) \end{bmatrix}
                \begin{bmatrix} u(0) \\[1ex] u_t(0)\end{bmatrix} \\
&\qquad - \int_{0}^{t} \begin{bmatrix} |\nabla|^{-1}\sin\bigl((t-s)\sqrtDelta\bigr) \\[1ex]
        \cos\bigl((t-s)\sqrtDelta\bigr) \end{bmatrix} F(u(s))\,ds
\end{aligned}
\end{equation}
for all $t \in I$.  We refer to the interval $I$ as the \emph{lifespan} of $u$. We say that $u$ is a \emph{maximal-lifespan
solution} if the solution cannot be extended to any strictly larger interval. We say that $u$ is a \emph{global solution} if $I=\R$.
\end{definition}

We define the \emph{scattering size} of a solution to \eqref{nlw} on a time interval $I$ by
\begin{equation}\label{E:S defn}
\|u\|_{S(I)}:= \Bigl(\int_I \int_{\R^d} |u(t,x)|^{\frac{(d+1)p}2}\, dx \,dt\Bigr)^{\frac2{(d+1)p}}.
\end{equation}

Associated to the notion of solution is a corresponding notion of blowup.  By the standard local theory, the following precisely corresponds to the impossibility of continuing the solution
in a manner consistent with Definition~\ref{D:solution}.

\begin{definition}[Blowup]\label{D:blowup}
We say that a solution $u$ to \eqref{nlw} \emph{blows up forward in time} if there exists a time $t_1 \in I$ such that
$$ \|u\|_{S([t_1, \sup I))} = \infty$$
and that $u$ \emph{blows up backward in time} if there exists a time $t_1 \in I$ such that
$$ \|u\|_{S((\inf I, t_1])}= \infty.$$
\end{definition}

Our purpose here is to give a short proof of the following result:

\begin{theorem}[Spacetime bounds]\label{T:main}
Assume that $\frac4{d-2}< p<\frac4{d-3}$ for $3\leq d\leq 6$ and that $\frac4{d-2}< p<\frac{d(d-1)-\sqrt{d^2(d-1)^2-16(d+1)^2}}{2(d+1)}$ if $d\geq 7$.
Let $u:I\times\R^d\to\R$ be a spherically-symmetric solution to \eqref{nlw} such that $(u,u_t) \in L_t^\infty (I; \dot H^{s_c}_x \times \dot H^{s_c-1}_x)$.  Then
$$
\|u\|_{S(I)}\leq C\bigl(\|(u, u_t)\|_{L_t^\infty (I; \dot H^{s_c}_x \times \dot H^{s_c-1}_x)}\bigr).
$$
\end{theorem}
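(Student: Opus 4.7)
The plan is to run the concentration-compactness/rigidity program of Kenig--Merle, now standard in the critical setting and adapted to the energy-supercritical regime by passing to the critical Sobolev space $\dot H^{s_c}_x\times \dot H^{s_c-1}_x$. Assume for contradiction that the theorem fails. Then there exists a threshold
\[
E_* := \sup\Bigl\{E : \|u\|_{S(I)}\leq C(E)\text{ for all radial }u\text{ with }\|(u,u_t)\|_{L^\infty_t(\dot H^{s_c}_x\times\dot H^{s_c-1}_x)}\leq E\Bigr\}<\infty.
\]
A linear and nonlinear profile decomposition (the Bahouri--Gérard profile decomposition for the wave equation, combined with a perturbation lemma in the $L^{(d+1)p/2}_{t,x}$ Strichartz norm) then produces a minimal blowup solution $u$: a radial, maximal-lifespan, non-scattering solution with $\|(u,u_t)\|_{L^\infty_t(\dot H^{s_c}_x\times \dot H^{s_c-1}_x)}=E_*$ whose orbit, after quotienting by the scaling symmetry \eqref{scaling}, is precompact in $\dot H^{s_c}_x\times\dot H^{s_c-1}_x$. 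That is, there is a frequency scale function $N:I\to(0,\infty)$ so that $\{(N(t)^{-2/p}u(t,x/N(t)),\,N(t)^{-2/p-1}u_t(t,x/N(t)))\}$ is precompact; the radial symmetry eliminates the spatial translation parameter that would otherwise appear.

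Next I would classify such almost periodic solutions according to the behaviour of $N(t)$. A standard rescaling argument (together with a local constancy property of $N$ derived from the local theory) reduces matters to three enemies: (i) a self-similar solution on $I=(0,\infty)$ with $N(t)\sim t^{-1}$, (ii) a soliton-like solution on $I=\R$ with $N(t)\equiv 1$, and (iii) a finite-time blowup / frequency-cascade scenario with $N(t)\to\infty$ as $t$ approaches an endpoint of $I$. In each case the goal is to promote the $\dot H^{s_c}_x$ regularity to finite energy, and then use energy conservation to reach a contradiction.

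The crucial and technically most delicate step is proving that such an enemy actually lies in $\dot H^1_x\times L^2_x$, i.e.\ has \emph{additional decay in frequency} compared with what $s_c$ guarantees. The natural tool is the double Duhamel trick: pair the forward and backward Duhamel expansions \eqref{old duhamel} against a Littlewood--Paley projection, use Strichartz and dispersive estimates on the resulting bilinear form, and exploit the radial weighted estimates of Sterbenz to gain decay at low frequencies. This is precisely where the range restrictions on $p$ enter: the quadratic condition for $d\geq 7$ and the bound $p<\tfrac{4}{d-3}$ for $d\leq 6$ are exactly the conditions under which the relevant exponents close, i.e.\ the iteration gains genuine regularity each time without losing integrability of the nonlinearity. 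This is the main obstacle and also the reason the theorem is restricted.

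Finally, once the enemy is known to have finite energy $E(u)<\infty$, I would close each scenario separately. The self-similar case is eliminated by showing that $\|(u,u_t)\|_{\dot H^1_x\times L^2_x}$ inside the backward light cone vanishes as $t\downarrow 0$, forcing $u\equiv 0$ via finite speed of propagation and then a Gronwall argument outside the cone. The soliton-like and frequency-cascade cases are eliminated by combining energy conservation with the almost-periodicity in $\dot H^{s_c}_x$: conservation of $E(u)$ is inconsistent with $N(t)$ being either constant (via a Morawetz/virial inequality applied to $u$ in the energy class, using radiality) or unbounded (since rescaling then forces the kinetic and potential energies to redistribute in a way incompatible with boundedness of $E(u)$). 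Any of these outcomes contradicts the existence of the minimal counterexample and therefore establishes Theorem~\ref{T:main}.
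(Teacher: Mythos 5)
Your overall framework (induction on the critical norm, profile decomposition, reduction to an almost periodic minimal counterexample) matches the paper's Theorem~\ref{T:enemies}, but from that point on you take a genuinely different---and, as written, incomplete---route. The entire weight of your argument rests on the step you yourself flag as ``crucial and technically most delicate'': upgrading the enemies from $\dot H^{s_c}_x\times\dot H^{s_c-1}_x$ to finite energy via a double Duhamel argument. You only name this step; you do not execute it, and it is by far the hardest part of the program you describe (it occupies the bulk of \cite{kenig-merle:wave sup} and \cite{OK}). Moreover, your claim that the hypotheses on $p$ are ``exactly the conditions under which the relevant exponents close'' in that iteration is not accurate for this paper: the restriction $p<\frac4{d-3}$ is the condition $s_c<3/2$ and comes from the \emph{scaling of the Morawetz inequality}, while the extra restriction for $d\geq 7$ comes from the local/stability theory (it guarantees the exotic Strichartz exponents in Section~\ref{S:stability} are admissible and that one can take $s_c$ derivatives of the nonlinearity). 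So as a proof the proposal has a genuine gap, and as an account of why the theorem holds in the stated range it points at the wrong mechanism.

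The paper's actual argument deliberately avoids any regularity upgrade (see the remarks after Theorem~\ref{T:main}). For the two global enemies it applies the \emph{space-truncated} Morawetz identity of Lemma~\ref{L:Morawetz}, whose upper bound $T^{d-2-\frac4p}$ is sublinear in $T$ precisely when $s_c<3/2$, against the lower bound $\gtrsim_u T$ supplied by potential-energy concentration (Proposition~\ref{P:pot conc}, using $N(t)\geq 1$ in both scenarios); taking $T$ large gives the contradiction with only the critical-norm bound in hand. For the finite-time blowup enemy (note the paper's trichotomy has no self-similar scenario; its cascade is global with $\inf N\geq 1$), almost periodicity plus $N(t)\to\infty$ forces $\supp u(t)\subseteq B(0,t)$, and then H\"older and Sobolev alone show $E(u(t))\lesssim_u t^{d-2-\frac4p}\to 0$, so energy conservation gives $u\equiv 0$ --- no finite-energy upgrade is needed there either. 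Your route, if completed, would in fact prove more (it would remove the restriction $s_c<3/2$), but that is precisely the ``significantly more involved'' path the authors chose not to take, and you have not supplied the analysis that would make it work.
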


For $d=3$ this was proved by Kenig and Merle \cite{kenig-merle:wave sup}.  In \cite{OK} we proved this result for non-radial data.  While preparing \cite{OK}, we realized that it is possible to give a short proof of Theorem~\ref{T:main} that works uniformly in all dimensions.  That is the topic of this paper.  In keeping with our goal of a simple presentation, we have restricted ourselves to the specific values of $p$ stated in the theorem.  These hypotheses represent the combination of two restrictions, one related to the local theory (which dominates in high dimensions) and another dictated by the Morawetz inequality ($s_c<3/2$).  The former restriction stems from our desire to present as simple and uniform a local theory as possible.  While one may certainly obtain a larger range of $p$ in this setting by some piecemeal approach, it is not clear to us how to obtain the full range dictated by our principal hypothesis $s_c<3/2$.  Indeed, note that one natural restriction is the smoothness condition $s_c<p+1$; this allows us to take $s_c$ derivatives of the nonlinearity.  Our condition for $d\geq 7$ is equivalent to
$s_c<p+1 -(\frac1{d+1} +\frac p2)$.

In low dimensions, the sole restriction on $p$ is $p<\frac{4}{d-3}$ and corresponds to $s_c < 3/2$, which nevertheless covers all values of $p$ when $d=3$.  This condition is dictated by the Morawetz inequality.  The methods presented here do not immediately extend to higher values of $p$.  The problem arises in Section~\ref{S:no soliton} and could be circumvented by proving that the almost periodic solutions discussed below actually lie in $L^\infty_t(\dot H_x^s\times \dot H_x^{s-1})$ for some $s<3/2$.  Arguments showing how this can be done (even in the non-radial setting) may be found in \cite{Berbec,Miel,OK}; however, this is significantly more involved than what we chose to present here.

As mentioned above, finite-time blowup of a solution to \eqref{nlw} must be accompanied by divergence of the scattering
size defined in \eqref{E:S defn}.  Thus, Theorem~\ref{T:main} immediately implies

\begin{corollary}[Spacetime bounds]\label{C:main} If $u:I\times\R^d\to\R$ is a maximal-lifespan spherically-symmetric solution to \eqref{nlw} with
$(u,u_t)\in L_t^\infty (I; \dot H^{s_c}_x \times \dot H^{s_c-1}_x)$, then $u$ is global,
$$
\|u\|_{S(\R)}\leq C\bigl(\|(u, u_t)\|_{L_t^\infty (\R; \dot H^{s_c}_x \times \dot H^{s_c-1}_x)}\bigr),
$$
and $u$ scatters in the sense that
$$
\bigl \|  (u,u_t) - (u^\pm, u_t^\pm)\bigr\|_{\dot H^{s_c}_x\times\dot H^{s_c-1}_x} \to 0 \quad \text{as} \quad t\to \pm\infty,
$$
for two solutions $u^\pm$ of the linear wave equation.
\end{corollary}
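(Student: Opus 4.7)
The plan is to deduce Corollary \ref{C:main} from Theorem \ref{T:main} combined with the standard local theory underlying Definition \ref{D:solution}.

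For globality, suppose toward a contradiction that $\sup I<\infty$. The standard local well-posedness/blowup criterion for \eqref{nlw}---invoked in the sentence immediately preceding Definition \ref{D:blowup}---implies that the impossibility of extending $u$ past $\sup I$ in the sense of Definition \ref{D:solution} forces $\|u\|_{S([t_1,\sup I))}=\infty$ for some $t_1\in I$.  However, the hypothesis $(u,u_t)\in L^\infty_t(I;\dot H^{s_c}_x\times\dot H^{s_c-1}_x)$ together with Theorem \ref{T:main} gives $\|u\|_{S(I)}\leq C(\|(u,u_t)\|_{L_t^\infty(\dot H^{s_c}_x\times\dot H^{s_c-1}_x)})<\infty$, a contradiction. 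The same argument rules out $\inf I>-\infty$, so $I=\R$, and the spacetime bound on all of $\R$ is then immediate from Theorem \ref{T:main}.

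For scattering, with $\|u\|_{S(\R)}<\infty$ in hand, standard wave-equation Strichartz estimates at critical regularity $s_c$, combined with a fractional chain-rule estimate for $F(u)=|u|^pu$, show that the nonlinear Duhamel contribution in \eqref{old duhamel}, pulled back to time $0$ by the free propagator, is Cauchy in $\dot H^{s_c}_x\times\dot H^{s_c-1}_x$ as the upper limit of integration tends to $\pm\infty$. Defining $(u^\pm(0),u_t^\pm(0))$ to be $(u(0),u_t(0))$ minus these limits and propagating by the free wave flow yields the scattering states. Rewriting $(u(t),u_t(t))-(u^\pm(t),u_t^\pm(t))$ as the tail Duhamel integral from $t$ to $\pm\infty$ and applying the same Strichartz bound forces this difference to vanish in $\dot H^{s_c}_x\times\dot H^{s_c-1}_x$ as $t\to\pm\infty$.

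The real content here is Theorem \ref{T:main}; once that is granted, the only point of verification is that the admissible Strichartz pairs and the fractional nonlinear estimate for $|u|^p u$ combine in the range of $p$ and $d$ permitted by the theorem, which is by now routine.
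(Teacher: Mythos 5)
Your proposal is correct and follows exactly the route the paper intends: the paper gives no written proof of Corollary~\ref{C:main}, stating only that it is immediate from Theorem~\ref{T:main} together with the standard local theory (namely, that failure to extend a maximal-lifespan solution forces divergence of the scattering size, and that a global solution with finite $S(\R)$ norm and bounded critical norm scatters via the usual Duhamel/Strichartz argument). Your write-up simply makes explicit the two standard steps the authors leave to the reader.
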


This corollary takes on a more appealing form if we rephrase it in the contrapositive:

\begin{corollary}[Nature of blowup]\label{C:blowup} A spherically-symmetric solution $u:I\times\R^d\to\R$ to \eqref{nlw} can only blow up in finite time
or be global but fail to scatter if its $\dot H^{s_c}_x \times \dot H^{s_c-1}_x$ norm diverges.
\end{corollary}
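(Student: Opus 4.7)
The plan is to derive this corollary as the direct logical contrapositive of Corollary~\ref{C:main}. Corollary~\ref{C:main} asserts that a spherically-symmetric solution whose critical Sobolev norm remains bounded on its maximal lifespan is automatically global, has finite $S(\R)$-norm, and scatters as $t\to\pm\infty$. The forward step is therefore to assume for contradiction that $\|(u(t),u_t(t))\|_{\dot H^{s_c}_x\times\dot H^{s_c-1}_x}$ does \emph{not} diverge along $I$, i.e.\ stays uniformly bounded; invoke Corollary~\ref{C:main}; and conclude that $u$ is global and scatters, ruling out both finite-time blowup and failure to scatter.

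The only modest care needed is to pin down the meaning of ``its $\dot H^{s_c}_x\times\dot H^{s_c-1}_x$ norm diverges''. I would read it as the failure of the map $t\mapsto \|(u(t),u_t(t))\|_{\dot H^{s_c}_x\times\dot H^{s_c-1}_x}$ to lie in $L^\infty_t(I)$; equivalently, one can extract a sequence $t_n\in I$ approaching the relevant obstruction (a finite endpoint of $I$ in the blowup case, or $\pm\infty$ in the non-scattering global case) along which this norm tends to $+\infty$. With this reading, the statement of Corollary~\ref{C:blowup} is \emph{precisely} the contrapositive of the implication in Corollary~\ref{C:main}.

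There is no substantive obstacle to overcome: all of the analytic content is packaged into Corollary~\ref{C:main}, which itself is extracted from Theorem~\ref{T:main} by invoking the standard local well-posedness theory for \eqref{nlw} (blowup of a solution in the sense of Definition~\ref{D:blowup} is equivalent to the impossibility of extending it as a strong solution). Accordingly, I expect the proof to occupy a single short paragraph that simply cites Corollary~\ref{C:main} and records the contrapositive reformulation.
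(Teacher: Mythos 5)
Your proposal is correct and matches the paper exactly: the paper offers no separate argument for Corollary~\ref{C:blowup}, introducing it with the remark that it is simply Corollary~\ref{C:main} ``rephrased in the contrapositive.'' Your additional care about interpreting ``the norm diverges'' as failure of the $L^\infty_t$ bound on $I$ is a reasonable clarification but not something the paper itself elaborates.
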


When $p=\frac 4{d-2}$, or equivalently, $s_c=1$, the critical Sobolev norm is automatically bounded in time by virtue of the
conservation of energy.  This \emph{energy-critical} case of \eqref{nlw} has received particular attention because of this property.  Global
well-posedness was proved in a series of works \cite{Grillakis:3Dwave,Grillakis:lowDwave,Kapitanskii,Rauch,Struwe:rad,ShatahStruwe:reg,ShatahStruwe}
with finiteness of the scattering size being added later; see \cite{bahouri-gerard,GinibreSofferVelo,Nakanishi,Pecher,tao:Ecrit wave}.  Certain monotonicity formulae, namely
the Morawetz and energy flux identities, play an important role in all these results.  It is important that these monotonicity formulae also have critical scaling.

In the energy-supercritical case discussed in this paper, all conservation laws and monotonicity formulae have scaling below the critical regularity.
At the present moment, there is no technology for treating large-data dispersive equations without some \emph{a priori} control of a critical norm.
This is the purpose of the $L^\infty_t(I; \dot H^{s_c}_x \times \dot H^{s_c-1}_x)$ assumption in Theorem~\ref{T:main}; it plays the role of the missing
conservation law at the critical regularity.  To deal with the fact that the basic monotonicity formula scales like the energy (rather than the critical regularity),
we employ a space truncation in the manner of \cite{borg:scatter}; see also \cite{KVZ:quadratic, tao:gwp radial}.

\subsection{Outline of the proof}\label{SS:outline}

We argue by contradiction.  By the fundamental observations of Keraani \cite{keraani-l2} and Kenig--Merle \cite{KenigMerle:H1}, we know that failure of Theorem~\ref{T:main}
guarantees the existence of certain minimal counterexamples and moreover, such solutions have good compactness properties.  These properties are best described in terms of
the following notion:

\begin{definition}[Almost periodicity modulo scaling]\label{D:ap}
A solution $u$ to \eqref{nlw} with lifespan $I$ is said to be \emph{almost periodic modulo scaling} if $(u,u_t)$ is bounded
in $\dot H^{s_c}_x \times \dot H^{s_c-1}_x$ and there exist functions $N: I \to \R^+$ and $C: \R^+ \to \R^+$ such that for all $t \in I$ and $\eta > 0$,
$$
\int_{|x| \geq C(\eta)/N(t)} \bigl||\nabla|^{s_c} u(t,x)\bigr|^2\, dx
+ \int_{|x| \geq C(\eta)/N(t)} \bigl||\nabla|^{s_c-1} u_t(t,x)\bigr|^2\, dx\leq \eta
$$
and
$$
\int_{|\xi| \geq C(\eta) N(t)} |\xi|^{2s_c}\, | \hat u(t,\xi)|^2\, d\xi
+ \int_{|\xi| \geq C(\eta) N(t)} |\xi|^{2(s_c-1)}\, | \hat u_t(t,\xi)|^2\, d\xi\leq \eta .
$$
We refer to the function $N(t)$ as the \emph{frequency scale function} for the solution $u$ and to $C(\eta)$ as the \emph{compactness modulus function}.
\end{definition}

\begin{remarks}
1. Spherical symmetry forces the bulk of the solution to concentrate around the spatial origin.  This is the reason for the absence of a spatial center function $x(t)$.

2. The continuous image of a compact set is compact.  Thus, by Sobolev embedding, almost periodic (modulo scaling)
solutions obey the following: For each $\eta>0$ there exists $C(\eta)>0$ so that
\begin{equation}\label{E:u' compact}
\bigl\| u(t,x) \bigr\|_{L^\infty_t L_x^{\frac{dp}{2}}(\{|x|\geq C(\eta)/N(t)\})}
    + \bigl\| \ntx u(t,x) \bigr\|_{L^\infty_t L_x^{\frac{dp}{p+2}}(\{|x|\geq C(\eta)/N(t)\})}
    \leq \eta,
\end{equation}
where $\ntx u =(u_t,\nabla u)$ denotes the space-time gradient of $u$.
\end{remarks}

With these preliminaries out of the way, we can now describe the first major milestone in the proof of Theorem~\ref{T:main}.

\begin{theorem}[Three special scenarios for blowup]\label{T:enemies}
Suppose that Theorem~\ref{T:main} failed.  Then there exists a maximal-lifespan spherically-symmetric solution $u:I\times\R^d\to \R$, which
obeys $(u,u_t) \in L_t^\infty (I; \dot H^{s_c}_x \times \dot H^{s_c-1}_x)$, is almost periodic modulo scaling, and
$\|u\|_{S(I)}=\infty$.  Moreover, we can also ensure that the lifespan $I$ and the frequency scale function $N:I\to\R^+$
match one of the following three scenarios:
\begin{itemize}
\item[I.] (Finite-time blowup) We have that either $\sup I<\infty $ or $|\inf I|<\infty$.
\item[II.] (Soliton-like solution) We have $I = \R$ and $N(t) = 1$ for all $t \in \R$.
\item[III.] (Low-to-high frequency cascade) We have $I = \R$,
\begin{equation*}
\inf_{t \in \R} N(t) \geq 1, \quad \text{and} \quad \limsup_{t \to +\infty} N(t) = \infty.
\end{equation*}
\end{itemize}
\end{theorem}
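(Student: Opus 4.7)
The plan is to run the standard Kenig--Merle concentration-compactness reduction \cite{KenigMerle:H1}, specialized to radial data in the energy-supercritical wave setting. Define
\[
L(A) := \sup\bigl\{\|u\|_{S(I)} : u \text{ a radial solution of \eqref{nlw} with } \|(u,u_t)\|_{L^\infty_t(I;\dot H^{s_c}_x\times\dot H^{s_c-1}_x)} \leq A\bigr\}
\]
and $A_c := \sup\{A\geq 0 : L(A) < \infty\}$. If Theorem~\ref{T:main} fails then $A_c<\infty$, so by definition of $A_c$ there is a sequence $u_n$ of radial solutions on maximal intervals $I_n$ with critical norms tending up to $A_c$ and scattering size $\|u_n\|_{S(I_n)}\to\infty$. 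The first task is to distill a single minimal blowup solution out of this sequence.

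To do so, I would apply a radial Bahouri--G\'erard-type linear profile decomposition in $\dot H^{s_c}_x\times\dot H^{s_c-1}_x$ to the data $(u_n(t_n),\partial_t u_n(t_n))$ at suitably chosen times $t_n$, lift each profile via the associated nonlinear flow, and feed the resulting ansatz into a stability/perturbation theorem for \eqref{nlw} at the critical regularity. The precise hypotheses on $p$ are exactly what is needed to run such a critical local theory uniformly across dimensions; in particular, the bound $s_c<p+1-\bigl(\tfrac{1}{d+1}+\tfrac{p}{2}\bigr)$ for $d\geq 7$ permits taking $s_c$ fractional derivatives of $|u|^{p}u$ in Strichartz-compatible spaces. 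Minimality of $A_c$ then forces all but one of the nonlinear profiles to be trivial---otherwise an asymptotic-orthogonality argument would synthesize a uniformly bounded scattering norm out of strictly subcritical profiles, contradicting $\|u_n\|_{S(I_n)}\to\infty$. The surviving profile yields a radial maximal-lifespan solution $u^{*}$ with $\|u^{*}\|_{S(I^{*})}=\infty$ whose trajectory $\{(u^{*}(t),u^{*}_t(t)):t\in I^{*}\}$ is precompact in $\dot H^{s_c}_x\times\dot H^{s_c-1}_x$ modulo the scaling symmetry \eqref{scaling}. Spherical symmetry eliminates any need for a spatial translation parameter $x(t)$, so this precompactness is exactly almost periodicity modulo scaling in the sense of Definition~\ref{D:ap}, and $u^{*}$ comes with a well-defined frequency scale function $N:I^{*}\to\R^{+}$.

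It remains to arrange $u^{*}$ into one of the three stated scenarios. If $\sup I^{*}<\infty$ or $|\inf I^{*}|<\infty$, we are already in Case~I. Otherwise $I^{*}=\R$, and we distinguish by the behavior of $N$. Rescaling $u^{*}$ by $\inf_{t\in\R}N(t)$---replacing it if necessary with a subsequential limit extracted via the precompactness modulo scaling---we may assume $\inf_{t}N(t)\geq 1$. If $N$ is then bounded above, a further time-translation and subsequential limiting argument, again powered by the precompactness of the orbit, produces a minimal blowup solution with $N\equiv 1$, giving Case~II. If instead $N$ is unbounded above, Case~III follows at once.

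The main obstacle will be the critical-regularity local well-posedness and stability theory for \eqref{nlw} that underpins the entire profile decomposition argument: one needs a perturbation result formulated in norms compatible with $s_c$ fractional derivatives of $|u|^{p}u$, valid uniformly in dimension within the stated range of $p$. Once this critical local theory is available, the profile decomposition, extraction of a minimal solution, and rescaling into the three scenarios follow the now-standard template of \cite{KenigMerle:H1,keraani-l2} essentially verbatim.
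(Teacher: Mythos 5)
Your outline follows the same Kenig--Merle/Keraani concentration-compactness route that the paper itself invokes: the paper does not prove Theorem~\ref{T:enemies} in detail, but reduces it to the profile decomposition of \cite{bahouri-gerard,Aynur}, the stability theory developed in Section~\ref{S:stability}, a decoupling argument, and the standard reduction to the three scenarios, which is exactly the structure you describe. The one caveat is that you assert the decoupling of nonlinear profiles follows ``essentially verbatim'' from the standard template, whereas the paper explicitly warns that this step is subtle here because $s_c>1$ and $p$ is small, and defers to \cite{Miel} for the requisite technology.
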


The proof of Theorem~\ref{T:enemies} follows a well-travelled path.  See \cite{ClayNotes} for an introduction to these techniques including two worked examples and references up to that time.
Let us briefly review the ingredients: (i) A concentration compactness principle (= profile decomposition) for the linear propagator.  The very first example of this was worked out
for the wave equation (with $d=3$) in \cite{bahouri-gerard}.   The extension to all dimensions can be found in \cite{Aynur}.  (ii) A perturbation theory for the nonlinear equation.
While the basic framework is standard, each equation has its peculiarities, particularly when small-power nonlinearities are involved.  We discuss this at some length in Section~\ref{S:stability}, in part because
our arguments unify and simplify existing results for certain special cases.  (iii) A decoupling argument.  This is usually fairly direct; however, some subtleties arise in the model discussed in this paper
due to the fact that $s_c>1$ and $p$ is small.  The requisite technology can be found in \cite{Miel}.

With Theorem~\ref{T:enemies} in hand, the proof of Theorem~\ref{T:main} reduces to showing that none of the three special scenarios can occur.  In Section~\ref{S:ftb} we show that the first
scenario, a finite-time blowup solution, cannot exist because it is inconsistent with the conservation of energy.  In Section~\ref{S:no soliton} we show that neither of the other two
scenarios can occur, since they are inconsistent with the (truncated) Morawetz identity (cf. Lemma~\ref{L:Morawetz}) when $p<\frac4{d-3}$.

\subsection*{Acknowledgements}
The first author was supported by NSF grant DMS-0701085.  The second author was supported by NSF grant DMS-0901166.

%
%
%
%

\section{NLW background}

We write $X\lesssim Y$ to indicate that $X\leq C Y$ for some dimension-dependent constant $C$, which may change from line to line.  Other dependencies will be indicated with subscripts, for example, $X\lesssim_u Y$.

\subsection{Strichartz estimates}
One of the most fundamental tools in the modern analysis of nonlinear wave equations is the Strichartz estimate.  We record some particular
instances of this estimate below.  For further information, see \cite{tao:keel,Pecher, Strichartz77} and the references therein.

\begin{definition}[Admissible pairs]
We say that the pair $(q,r)$ is \emph{wave-admissible} if
$$
\tfrac1q + \tfrac{d-1}{2r}\leq \tfrac{d-1}4, \quad 2\leq q\leq \infty, \qtq{and} 2\leq r<\infty.
$$
\end{definition}

\begin{lemma}[Strichartz estimates]\label{L:Strichartz}
Fix $d\geq 3$.  Let $I$ be a compact time interval and let $u: I\times\R^d \to \mathbb \R$ be a solution to the forced wave equation
$$
u_{tt}- \Delta u + F = 0.
$$
Then for any $t_0\in I$ and any wave-admissible pair $(q,r)$,
\begin{align*}
\bigl\|\ntx u\bigr\|_{L_t^\infty \dot H^{s_c-1}_x}&+ \|u\|_{S(I)} + \bigl\| |\nabla|^{s_c-\frac12}u\bigr\|_{L_{t,x}^{\frac{2(d+1)}{d-1}}}
+ \bigl\||\nabla|^{\gamma-1} \ntx u\|_{L_t^q L_x^r}\\
&\lesssim \bigl\|\ntx u(t_0)\bigr\|_{\dot H^{s_c-1}_x}+ \bigl\| |\nabla|^{s_c-\frac12} F\bigr\|_{L_{t,x}^{\frac{2(d+1)}{d+3}}},
\end{align*}
provided $\frac1q+\frac dr=\frac 2p + \gamma$.  All spacetime norms in the formula above are on $I\times\R^d$.
\end{lemma}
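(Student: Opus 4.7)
The plan is to deduce all four bounds on the left-hand side from the standard Strichartz inequality for the wave equation (cf.\ Keel--Tao \cite{tao:keel}), applied to the Duhamel formula \eqref{old duhamel}. Each LHS term is a Strichartz norm at the critical regularity corresponding to a specific wave-admissible pair; the work consists of identifying the right pair and checking that the scaling exponents match the data regularity $\dot H^{s_c-1}_x$ carried by $\ntx u(t_0)$.

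The natural choices are: $(q,r)=(\infty,2)$ for the energy-type term $\|\ntx u\|_{L_t^\infty \dot H^{s_c-1}_x}$; the diagonal pair $(q,r)=((d+1)p/2,(d+1)p/2)$ for the scattering norm $\|u\|_{S(I)}$, whose admissibility condition reduces to $1/p\leq (d-1)/4$ and so is guaranteed by the energy-supercritical hypothesis $p>\tfrac{4}{d-2}$; the endpoint Strauss pair $(2(d+1)/(d-1),2(d+1)/(d-1))$ for $\||\nabla|^{s_c-\frac12}u\|_{L_{t,x}^{2(d+1)/(d-1)}}$, where the $s_c-\tfrac12$ fractional derivatives bridge the Sobolev gap between the data regularity and this Strichartz space; and, finally, the given pair $(q,r)$ for $\||\nabla|^{\gamma-1}\ntx u\|_{L^q_t L^r_x}$, where the hypothesis $\tfrac1q+\tfrac dr=\tfrac2p+\gamma$ is precisely the scaling identity that places $(\gamma-1)$ derivatives of $\ntx u$ at the same regularity as the data.

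For the inhomogeneous contribution I would take the Strauss endpoint pair on the right-hand side as well, so that by duality $F$ is measured in $L_{t,x}^{2(d+1)/(d+3)}$; the extra $|\nabla|^{s_c-\frac12}$ derivatives on $F$ pin the forcing to the same critical scale as the various LHS terms. When the two admissible pairs differ, the Christ--Kiselev lemma converts the bilinear Keel--Tao bound into the retarded estimate needed for the Duhamel integral, and the base-point flexibility in $t_0\in I$ comes for free since the homogeneous Strichartz estimate can be based at any time. The only real work is bookkeeping: in each case one must verify that the fractional derivative order plus $\tfrac d2-\tfrac dr-\tfrac 1q$ equals $s_c-1$ (when estimating $\ntx u$) or $s_c$ (when estimating $u$), and that each admissible pair is in range for every $d\geq 3$. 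No tool beyond the Keel--Tao endpoint estimate and the Christ--Kiselev trick is required; the main obstacle, such as it is, is the tacit use of $p>\tfrac{4}{d-2}$ to secure admissibility of the scattering pair.
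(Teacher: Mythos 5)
Your derivation is correct: the paper states this lemma without proof, deferring to the standard references (Keel--Tao, Pecher, Strichartz), and your outline --- Duhamel plus the endpoint Strichartz machinery with Christ--Kiselev for the retarded term, together with the scaling bookkeeping $\tfrac1q+\tfrac dr=\tfrac d2-(s_c-\gamma)$ and the observation that admissibility of the diagonal pair $q=r=\tfrac{(d+1)p}{2}$ follows from $p>\tfrac4{d-2}$ --- is exactly the argument those references supply. No discrepancy with the paper's (implicit) proof.
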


We will use the notation
\begin{align}\label{Ssc}
\|u\|_{S^{s_c}(I)}:= \sup \bigl\||\nabla|^{\gamma-1} \ntx u\|_{L_t^q L_x^r},
\end{align}
where the supremum is taken over all admissible pairs $(q,r)$ and numbers $\gamma$ obeying the scaling condition $\frac1q+\frac dr=\frac 2p + \gamma$, with $r\leq r_*(d,p)$ dictated by the largest exponent appearing
in the arguments below.

The following result will be needed in Section~\ref{S:stability}; its proof requires only minor modifications to the proof of the Christ--Weinstein fractional chain rule presented
in \cite[\S2.5]{ToolsPDE}.

\begin{lemma}[Derivatives of differences]\label{L:frac rule}
Let $F(u)=|u|^pu$ with $p>0$ and let $0<s<1$.  Then for $1<q,q_1,q_2<\infty$ such that $\tfrac1q=\tfrac 1{q_1} + \tfrac p{q_2}$, we have
$$
\bigl\| |\nabla|^s [F(u+v) -F(u)]\bigr\|_q\lesssim \bigl\| |\nabla|^s u\bigr\|_{q_1} \|v\|_{q_2}^p + \| |\nabla|^s v\bigr\|_{q_1} \|u+v\|_{q_2}^p.
$$
\end{lemma}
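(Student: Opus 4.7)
The plan is to mimic the square-function proof of the Christ--Weinstein fractional chain rule given in \cite[\S2.5]{ToolsPDE}. The starting point is the equivalence, valid for $0<s<1$ and $1<q<\infty$,
\begin{equation*}
\bigl\||\nabla|^s w\bigr\|_{L^q_x} \sim \left\|\left(\int_{\R^d} \frac{|w(\cdot+h)-w(\cdot)|^2}{|h|^{d+2s}}\,dh\right)^{1/2}\right\|_{L^q_x}.
\end{equation*}
Setting $w:=F(u+v)-F(u)$, the task reduces to a pointwise estimate on $w(x+h)-w(x)$ that splits cleanly into a piece proportional to $\Delta_h u:=u(x+h)-u(x)$ paired with factors of $|v|^p$, and a piece proportional to $\Delta_h v$ paired with factors of $|u+v|^p$.

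The key algebraic step is to apply the mean-value identity $F(a)-F(b)=(a-b)\int_0^1 F'(b+\tau(a-b))\,d\tau$ once with $(a,b)=((u+v)(x+h),(u+v)(x))$ and once with $(a,b)=(u(x+h),u(x))$ and subtract. Writing $\Delta_h f:=f(x+h)-f(x)$, this produces the decomposition
\begin{equation*}
w(x+h)-w(x)=\Delta_h v\cdot G_1+\Delta_h u\cdot(G_1-G_2),
\end{equation*}
where $G_1,G_2$ are integrals of $F'$ along the segments joining $(u+v)(x)$ to $(u+v)(x+h)$ and $u(x)$ to $u(x+h)$, respectively. Since $|F'(z)|\lesssim|z|^p$, convexity (for $p\geq1$) or $p$-subadditivity (for $p\leq1$) yields $|G_1|\lesssim|u+v|(x)^p+|u+v|(x+h)^p$. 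For the crucial difference $G_1-G_2$, one exploits the continuity of $z\mapsto|z|^p$: when $p\leq1$ the H\"older bound $\bigl||z_1|^p-|z_2|^p\bigr|\lesssim|z_1-z_2|^p$ directly yields $|G_1-G_2|\lesssim|v(x)|^p+|v(x+h)|^p$; when $p>1$ the Lipschitz bound $\bigl||z_1|^p-|z_2|^p\bigr|\lesssim(|z_1|^{p-1}+|z_2|^{p-1})|z_1-z_2|$, combined with $|u|\leq|u+v|+|v|$ and the elementary inequality $a^{p-1}b\leq a^p+b^p$ (for $a,b\geq 0$), reaches the same conclusion after absorbing stray $|u|^p$ factors.

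Substituting these pointwise bounds into the square function and taking square roots produces contributions of the schematic forms $|u+v|(x)^p\cdot(S_s v)(x)$ and $|v(x)|^p\cdot(S_s u)(x)$, whose $L^q_x$-norms are precisely what H\"older's inequality with $\tfrac1q=\tfrac1{q_1}+\tfrac{p}{q_2}$ turns into the claimed right-hand side; the symmetric shifted pieces (with arguments $x+h$ in place of $x$) are controlled by the same expressions via standard Hardy--Littlewood maximal function arguments. The main obstacle is the asymmetric form of the right-hand side: the naive estimate would produce $\|u\|_{q_2}^p+\|v\|_{q_2}^p$ coefficients on both $\||\nabla|^s u\|_{q_1}$ and $\||\nabla|^s v\|_{q_1}$ terms, and the two-step mean-value decomposition above (together with the Young-type rearrangement in the $p>1$ case) is precisely what secures the cleaner $\|u+v\|_{q_2}^p$ coefficient on the $\||\nabla|^s v\|_{q_1}$ term.
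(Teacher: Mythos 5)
Your overall architecture coincides with the paper's: the paper also reduces the lemma to a pointwise bound on $w(x)-w(y)$ for $w=F(u+v)-F(u)$ and then runs the Christ--Weinstein machinery of \cite[\S2.5]{ToolsPDE}, which is exactly what your square-function setup implements (the paper routes this through Littlewood--Paley blocks and vector-valued maximal functions rather than the continuous Gagliardo-type square function, thereby avoiding the restriction $q>\tfrac{2d}{d+2s}$ attached to your opening equivalence; this is a minor point). The substantive difference is the pointwise step. The paper applies the Fundamental Theorem of Calculus to $F(u+v)$ and to $F(u)$ \emph{separately} and adds, arriving at the symmetric bound $|w(x)-w(y)|\lesssim\{|u(x)-u(y)|+|v(x)-v(y)|\}\{H(x)+H(y)\}$ with $H=|u|^p+|v|^p$; you instead subtract the two mean-value representations so as to isolate the coefficient $G_1-G_2$ of $\Delta_h u$. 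For $0<p\le 1$ your bound $|G_1-G_2|\lesssim|v(x)|^p+|v(x+h)|^p$ is correct, and your argument then genuinely produces the asymmetric right-hand side of the lemma. This is in fact \emph{more} than the paper's displayed pointwise estimate yields: fed into the same machinery, the symmetric bound only gives $(\||\nabla|^s u\|_{q_1}+\||\nabla|^s v\|_{q_1})(\|u\|_{q_2}^p+\|v\|_{q_2}^p)$, which contains the term $\||\nabla|^s u\|_{q_1}\|u\|_{q_2}^p$ absent from the statement.

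The gap is your $p>1$ case. There the Lipschitz bound gives $|G_1-G_2|\lesssim(|u|+|v|)^{p-1}(|v(x)|+|v(x+h)|)$ with arguments at $x$ and $x+h$, and the cross term $|u|^{p-1}|v|$ cannot be ``absorbed'' into $|v|^p$: the inequality $a^{p-1}b\le a^p+b^p$ reinstates $|u|^p$ and lands you back at the symmetric estimate. Nor can any cleverer decomposition succeed, because the asymmetric inequality is false for $p>1$. Take $p=2$, so $F(z)=z^3$, and set $u=\chi(x)\sin(Nx_1)$, $v=\eps\phi$ with $\chi,\phi$ fixed bumps. Then $F(u+v)-F(u)=3u^2v+O(\eps^2)$ contains the piece $-\tfrac32\eps\,\phi\chi^2\cos(2Nx_1)$ living at frequency $2N$, so the left-hand side is $\gtrsim\eps N^s$, whereas the claimed right-hand side is $O(N^s\eps^2+\eps)$; taking $\eps=N^{-s}$ and $N\to\infty$ defeats the inequality. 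Any true variant for $p>1$ must carry a term of the shape $\||\nabla|^s u\|_{q_1}(\|u\|_{q_2}^{p-1}+\|v\|_{q_2}^{p-1})\|v\|_{q_2}$. In summary: your proof is complete (and sharper than the paper's own two-line argument) for $0<p\le1$, but for $p>1$ the absorption step cannot be repaired, and only the symmetric version of the estimate is actually available.
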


\begin{proof}
By the Fundamental Theorem of Calculus,
\begin{align*}
\bigl|F(u(x))- F(u(y))\bigr| &= |u(x)-u(y)| \biggl| \int_0^1 F'\bigl(u(y) + t[u(x)-u(y)]\bigr)\, dt\biggr|\\
&\lesssim_p |u(x)-u(y)| \bigl\{ |u(x)|^p + |u(y)|^p \bigr\},
\end{align*}
and similarly for $F(u+v)$.  Combining these gives
\begin{align*}
\bigl| [F(u+v)-F(u)](x)&- [F(u+v)-F(u)](y) \bigr| \\
&\lesssim \bigl\{ |v(x)-v(y)| + |u(x)-u(y)| \bigr\}\bigl\{ H(x) + H(y) \bigr\},
\end{align*}
where $H(x)=|u(x)|^p + |v(x)|^p$.  With this estimate in hand, one may now follow verbatim the arguments used to prove Proposition~5.1 in \cite[\S2.5]{ToolsPDE}.
More precisely, this estimate is used in place of (5.4) in that book.
\end{proof}

As $p$ may be less than one, we will also need a version of the fractional chain rule for fractional powers:

\begin{lemma}[Fractional chain rule for a H\"older continuous function, \cite{Monica:thesis}]\label{L:FDFP}
Suppose $G$ is a H\"older continuous function of order $0<p<1$.  Then, for every $0<s<p$, $1<q<\infty$,
and $\tfrac{s}p<\sigma<1$ we have
\begin{align*}
\bigl\| |\nabla|^s G(u)\bigr\|_q
\lesssim \bigl\||u|^{p-\frac{s}{\sigma}}\bigr\|_{q_1} \bigl\||\nabla|^\sigma u\bigr\|^{\frac{s}{\sigma}}_{\frac{s}{\sigma}q_2},
\end{align*}
provided $\tfrac{1}{q}=\tfrac{1}{q_1} +\tfrac{1}{q_2}$ and $(1-\frac s{p \sigma})q_1>1$.
\end{lemma}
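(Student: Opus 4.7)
The plan is to prove the bound via a Littlewood--Paley decomposition of $G(u)$, using the pointwise H\"older inequality on $G$ to reduce the problem to estimates involving differences of $u$, and then a standard Sobolev bound in terms of the Hardy--Littlewood maximal function of $|\nabla|^\sigma u$. By the square-function characterization of the homogeneous Sobolev seminorm, it suffices to bound $\|\{N^s P_N G(u)\}_N\|_{L^q_x(\ell^2_N)}$, where $P_N$ is a Littlewood--Paley projector to frequencies $|\xi|\sim N$ with convolution kernel $N^d\check\phi(N\,\cdot\,)$ of zero mean. Using this cancellation we may write
$$P_N G(u)(x) = \int N^d\check\phi(N(x-y))\bigl[G(u(y))-G(u(x))\bigr]\,dy,$$
and H\"older continuity of $G$ immediately gives the pointwise bound
$$|P_N G(u)(x)|\lesssim \int N^d\bigl|\check\phi(N(x-y))\bigr|\,|u(y)-u(x)|^p\,dy.$$

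Next, the hypothesis $\sigma>s/p$ permits the exponent split
$$|u(y)-u(x)|^p = |u(y)-u(x)|^{s/\sigma}\cdot |u(y)-u(x)|^{p-s/\sigma}.$$
For the first factor, invoke the pointwise Sobolev estimate valid for $0<\sigma<1$,
$$|u(x)-u(y)|\lesssim |x-y|^\sigma\bigl(M|\nabla|^\sigma u(x)+M|\nabla|^\sigma u(y)\bigr),$$
where $M$ denotes the Hardy--Littlewood maximal function; for the second, use the crude bound $|u(y)-u(x)|^{p-s/\sigma}\lesssim |u(x)|^{p-s/\sigma}+|u(y)|^{p-s/\sigma}$. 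Inserting these into the integral representation, using the rapid decay of $\check\phi$ to absorb the factor $|x-y|^s$ against the kernel, and symmetrizing the $y$-dependence through a maximal function yields
$$|P_N G(u)(x)|\lesssim N^{-s}\bigl(M|\nabla|^\sigma u(x)\bigr)^{s/\sigma}\cdot M\bigl(|u|^{p-s/\sigma}\bigr)(x).$$

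Summing dyadically, via the square-function estimate and the uniform $N^{-s}$ gain, gives
$$\bigl\||\nabla|^s G(u)\bigr\|_q\lesssim \bigl\|(M|\nabla|^\sigma u)^{s/\sigma}\cdot M\bigl(|u|^{p-s/\sigma}\bigr)\bigr\|_q.$$
The claimed inequality now follows from H\"older's inequality with exponents $(q_1,q_2)$ satisfying $\tfrac1q=\tfrac1{q_1}+\tfrac1{q_2}$, together with the strong-type maximal function bound applied to each factor: $\|(M|\nabla|^\sigma u)^{s/\sigma}\|_{q_2}=\|M|\nabla|^\sigma u\|_{(s/\sigma)q_2}^{s/\sigma}\lesssim\||\nabla|^\sigma u\|_{(s/\sigma)q_2}^{s/\sigma}$ and $\|M(|u|^{p-s/\sigma})\|_{q_1}\lesssim\||u|^{p-s/\sigma}\|_{q_1}$. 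The main obstacles are twofold: first, rigorously extracting the pointwise bound in the previous paragraph, in particular the careful symmetric treatment of $x$ and $y$ in the Sobolev estimate and the use of the rapid decay of $\check\phi$ to reduce the $y$-dependence to a maximal function at $x$; and second, verifying that the exponent hypothesis $(1-\tfrac{s}{p\sigma})q_1>1$ is the precise integrability threshold required so that the composite maximal function bounds and H\"older interpolations align exactly with the Lebesgue exponents appearing on the right-hand side of the claimed inequality.
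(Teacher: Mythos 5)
The paper does not actually prove this lemma; it quotes it from \cite{Monica:thesis}, whose proof has the same general flavor as yours (Littlewood--Paley decomposition, the H\"older bound $|G(a)-G(b)|\lesssim|a-b|^p$, and the pointwise estimate $|u(x)-u(y)|\lesssim|x-y|^\sigma(M|\nabla|^\sigma u(x)+M|\nabla|^\sigma u(y))$). However, your sketch has a genuine gap at the decisive step. You derive a single bound of the form $N^s|P_NG(u)(x)|\lesssim H(x)$ \emph{uniformly in $N$} and then ``sum dyadically via the square-function estimate.'' A bound that is uniform in $N$ controls $\sup_N N^s|P_NG(u)(x)|$, i.e.\ the Besov $\dot B^s_{q,\infty}$ norm, but the square-function characterization of $\||\nabla|^sG(u)\|_q$ requires the $\ell^2_N$ norm over $N\in2^{\Z}$, and a constant sequence is not in $\ell^2$. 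To sum, one needs geometric decay on \emph{both} sides of some crossover frequency. The standard repair (and the one behind the quoted statement) is to prove two estimates: for the low frequencies a bound with no derivatives, $|P_NG(u)(x)|\lesssim M(|u|^p)(x)+|u(x)|^p$, so that $N^s|P_NG(u)(x)|$ sums for $N\leq N_0$; and for the high frequencies the full-strength bound $|P_NG(u)(x)|\lesssim N^{-p\sigma}\bigl[(M|\nabla|^\sigma u(x))^p+M\bigl((M|\nabla|^\sigma u)^p\bigr)(x)\bigr]$, which sums against $N^s$ for $N\geq N_0$ precisely because $\sigma>s/p$. Optimizing $N_0=N_0(x)$ pointwise produces the geometric mean $\bigl(M(|u|^p)(x)+\cdots\bigr)^{1-\frac{s}{p\sigma}}\bigl((M|\nabla|^\sigma u(x))^p+\cdots\bigr)^{\frac{s}{p\sigma}}$, and it is the factor $\bigl(M(|u|^p)\bigr)^{1-\frac{s}{p\sigma}}$, estimated in $L^{q_1}$ by the Hardy--Littlewood maximal inequality in $L^{(1-\frac{s}{p\sigma})q_1}$, that consumes the hypothesis $(1-\frac{s}{p\sigma})q_1>1$. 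The fact that your argument only ever needs $q_1>1$ and leaves this hypothesis unexplained is a symptom of the missing step, not a sign that the hypothesis is slack.

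A secondary, fixable inaccuracy: your claimed pointwise bound $|P_NG(u)(x)|\lesssim N^{-s}(M|\nabla|^\sigma u(x))^{s/\sigma}\,M(|u|^{p-s/\sigma})(x)$ is false as stated, because after expanding $(Mg(x)+Mg(y))^{s/\sigma}(|u(x)|^{p-s/\sigma}+|u(y)|^{p-s/\sigma})$ the cross term in which both factors are evaluated at $y$ yields a maximal function of a \emph{product}, $M\bigl((Mg)^{s/\sigma}|u|^{p-s/\sigma}\bigr)(x)$, not a product of maximal functions. That term can still be closed in $L^q$ (using $q>1$ and H\"older afterwards), so this is a presentational error rather than a fatal one, but it should not be swept into ``symmetrizing through a maximal function.'' I would also note that the exponent identities $(p-\tfrac{s}{\sigma})\cdot\tfrac{p\sigma}{p\sigma-s}=p$ and $\tfrac{s}{\sigma}\cdot\tfrac{p\sigma}{s}=p$ are what make the low/high interpolation land exactly on the stated right-hand side; verifying them is where the ``alignment of exponents'' you defer actually happens.
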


\subsection{Morawetz inequality}
Next we derive a space-localized Morawetz identity (cf. \cite{Morawetz75,MorawetzStrauss}).  It is a very close
analogue of the formula used in \cite{borg:scatter} in the NLS setting; see also \cite{KVZ:quadratic,tao:gwp radial}.

\begin{lemma}[Space-localized Morawetz]\label{L:Morawetz}
Let $u:I\times \R^d\to\R$ be a solution of \eqref{nlw} with $d\geq 3$.  Then
\begin{align*}
\int_I \int_{|x|\leq |I|} \frac{|u(t,x)|^{p+2}}{|x|} \,dx \,dt \lesssim |I|^{d-2-\frac4p} \Bigl(\|\ntx u\|^2_{L^\infty_t \dot H_x^{s_c-1}} +\|\ntx u\|_{L^\infty_t \dot H_x^{s_c-1}}^{p+2} \Bigr).
\end{align*}
\end{lemma}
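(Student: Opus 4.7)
The plan is to run the standard NLW Morawetz argument against a spatial cutoff at scale $R=|I|$ and then estimate every resulting term using Hölder on a ball combined with Sobolev embedding. Fix a smooth radial function $\phi\geq 0$ with $\phi\equiv 1$ on $\{|x|\leq R\}$, $\phi\equiv 0$ on $\{|x|\geq 2R\}$, and $|\nabla\phi|\lesssim R^{-1}$. Writing $A=\partial_r+\tfrac{d-1}{2|x|}$, the truncated Morawetz multiplier is $\phi\,Au$. Pairing \eqref{nlw} against $\phi\,Au$ and integrating by parts in both $t$ and $x$ produces an identity of the shape
\begin{equation*}
\frac{(d-1)p}{2(p+2)}\int_I\!\!\int_{\R^d}\!\phi\,\frac{|u|^{p+2}}{|x|}\,dx\,dt+\mathcal P
=\Bigl[\int_{\R^d}\!\phi\, u_t\,Au\,dx\Bigr]_{t\in\partial I}+\mathcal E,
\end{equation*}
where $\mathcal P$ collects the nonnegative bulk terms (the angular-gradient contribution and, when $d\geq 3$, a multiple of $\int\phi\,u^2/|x|^3$) and $\mathcal E$ collects every integral that carries a factor of $\nabla\phi$ and is therefore supported on the annulus $\{R\leq|x|\leq 2R\}$. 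Discarding $\mathcal P\geq 0$ on the left reduces the claim to bounding the boundary piece and $\mathcal E$.

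All remaining estimates rest on two Sobolev embeddings and the classical Hardy inequality. Using $\dot H^{s_c}_x\hookrightarrow L^{dp/2}_x$ (the defining scaling of $s_c$), $\dot H^{s_c-1}_x\hookrightarrow L^{2d/(d-2(s_c-1))}_x$, and $\bigl\||x|^{-s_c}u\bigr\|_{L^2_x}\lesssim\|u\|_{\dot H^{s_c}_x}$, a direct Hölder computation on $B_R$ yields
\begin{equation*}
\|\ntx u\|_{L^2(B_R)}+\bigl\|u/|x|\bigr\|_{L^2(B_R)}\lesssim R^{s_c-1}\|\ntx u\|_{\dot H^{s_c-1}_x},
\qquad
\int_{B_R}\!|u|^{p+2}\,dx\lesssim R^{d-2-\frac4p}\|u\|_{\dot H^{s_c}_x}^{p+2}.
\end{equation*}
Since $|Au|\lesssim|\nabla u|+|u|/|x|$, the first estimate bounds the boundary piece at $t\in\partial I$ by $R^{2(s_c-1)}\|\ntx u\|_{L^\infty_t\dot H^{s_c-1}_x}^2$. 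The quadratic part of $\mathcal E$ is controlled by $R^{-1}\cdot|I|\cdot R^{2(s_c-1)}\|\ntx u\|^2$ and the nonlinear part by $R^{-1}\cdot|I|\cdot R^{d-2-4/p}\|u\|^{p+2}$. Taking $R=|I|$ and invoking the identity $2(s_c-1)=d-2-\tfrac{4}{p}$ collapses all of these bounds to the common form $|I|^{d-2-\frac4p}\bigl(\|\ntx u\|_{L^\infty_t\dot H^{s_c-1}_x}^2+\|\ntx u\|_{L^\infty_t\dot H^{s_c-1}_x}^{p+2}\bigr)$, as required.

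The Morawetz identity itself I would derive by standard term-by-term integration by parts in $x$, relying on the ``quasi-antisymmetry'' $\int\phi\,f\,Af\,dx=-\tfrac12\int(\partial_r\phi)\,f^2\,dx$ to handle the $u_t$--$u_t$ pairing without spurious contributions at the origin, and on careful tracking of how the two pieces of $Au$ interact with the Laplacian. The main obstacle is organizational rather than analytic: one must verify that each spatial integration by parts either generates a positive bulk contribution (placed in $\mathcal P$), is cancelled by the partner contribution from the other half of $Au$, or produces an annulus-localized term with a spare factor $|\nabla\phi|\lesssim R^{-1}$. Once this bookkeeping is done, the choice $R=|I|$ is the unique one that simultaneously matches the quadratic boundary/error terms and the $(p+2)$-power error to the single exponent $|I|^{d-2-4/p}$.
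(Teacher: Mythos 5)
Your proposal is correct and follows essentially the same route as the paper: a Morawetz identity truncated at scale $R=|I|$, with the boundary terms and the annulus error terms controlled via H\"older, Hardy, and Sobolev through the critical norm, using $2(s_c-1)=d-2-\tfrac4p$. The only cosmetic difference is that you truncate by multiplying the full multiplier $Au$ by a cutoff $\phi$, whereas the paper truncates the weight itself, taking $a(x)=R\,\psi(|x|/R)$ so that the multiplier is exactly $-Au$ on $B_R$ and vanishes outside $B_{2R}$; the resulting error structure and scalings are identical.
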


\begin{proof}
By direct computation, one sees that taking the time derivative of
\begin{equation*}
M(t):=\int_{\R^3} -a_j(x)u_t(t,x) u_j(t,x) - \tfrac12 a_{jj}(x) u(t,x) u_t(t,x) \,dx
\end{equation*}
yields
\begin{equation*}
\frac{dM}{dt} = \int_{\R^3} a_{jk}(x) u_j(t,x)u_k(t,x) + \tfrac{p}{2(p+2)} a_{jj}(x) u(t,x)^{p+2} - \tfrac14a_{jjkk}(x) u(t,x)^2 \,dx
\end{equation*}
for general functions $a:\R^d\to\R$.  Here subscripts denote spatial derivatives and repeated indices are summed.  Setting $R=|I|$, we choose $a(x):=R\, \psi(|x|/R)$,
where $\psi(r)$ is a smooth non-decreasing function obeying $\psi(r)=r$  if $r\leq 1$ and $\psi(r)=3/2$ when $r\geq 2$.

Simple computations show that for $|x|\leq R$, we have
\begin{align*}
a_k(x)&=\frac{x_k}{|x|},\quad  a_{jj}(x) = \frac{d-1}{|x|}> 0,\quad -a_{jjkk}>0 \quad \text{(as a distribution),}
\end{align*}
and the matrix $a_{jk}(x)$ is positive definite.  On the other hand, when $R\le |x|\le 2R$,
\begin{align*}
|a_k(x)| \lesssim 1,\quad |a_{jk}(x)|&\lesssim R^{-1},\qtq{and}\ |-a_{jjkk}(x)|\lesssim R^{-3},
\end{align*}
while all derivatives of $a$ vanish when $|x|\geq 2R$.  Combining this information with H\"older's, Hardy's, and Sobolev's inequalities yields
$$
| M(t) | \lesssim R^{d-2-\frac4p} \|\ntx u\|^2_{L^\infty_t L_x^{\frac{dp}{p+2}}} \lesssim R^{d-2-\frac4p} \|\ntx u\|^2_{L^\infty_t\dot H_x^{s_c-1}}
$$
and similarly,
$$
\frac{dM}{dt}  \geq \tfrac{p(d-1)}{2(p+2)} \!\! \int_{|x|\leq R} \!\! \frac{|u(t,x)|^{p+2}}{|x|} \,dx - R^{d-3-\frac4p}O\Bigl(\|\ntx u\|^2_{L^\infty_t \dot H_x^{s_c-1}} +\|\ntx u\|_{L^\infty_t \dot H_x^{s_c-1}}^{p+2} \Bigr).
$$
The result now follows by the Fundamental Theorem of Calculus.
\end{proof}

\subsection{Potential energy concentration}

To make use of Lemma~\ref{L:Morawetz}, we need a lower bound on the left-hand side.  This cannot be done pointwise in time, but we do have the following:

\begin{proposition}[Potential energy concentration] \label{P:pot conc}
Let $u$ be a global solution to \eqref{nlw} that is almost periodic modulo scaling.  Then, there exists $C=C(u)$ so that
\begin{equation}\label{E:some pot E}
\int_I \int_{|x-x(t)|\leq C/ N(t)} N(t) |u(t,x)|^{p+2} \,dx\,dt  \gtrsim_u \int_I N(t)^{\frac4p-(d-3)} \,dt,
\end{equation}
uniformly for all intervals $I=[t_1,t_2]\subseteq \R$ with $t_2\geq t_1 + N(t_1)^{-1}$.
\end{proposition}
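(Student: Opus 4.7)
\emph{Proof plan.} The strategy is to reduce \eqref{E:some pot E}, by scaling, to a lower bound on any rescaled unit-length time interval, and then patch the local bounds together. Both sides of \eqref{E:some pot E} share the same scaling, which makes this natural. The main ingredients are the precompactness of the orbit (provided by almost periodicity), the local well-posedness theory from Section~\ref{S:stability}, and a rigidity/contradiction argument.

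The first step is a \emph{local constancy} lemma: there exist $T_0 = T_0(u) > 0$ and $c = c(u) \in (0,1)$ such that $cN(t_0) \le N(t) \le c^{-1}N(t_0)$ for all $t \in [t_0, t_0+T_0/N(t_0)]$. After rescaling to $N(t_0) = 1$ this says the frequency scale stays bounded on $[0,T_0]$, and follows from the standard continuity-of-$N(t)$ argument using Strichartz on the precompact rescaled family. The second, central step is a \emph{unit-interval lower bound}: there exist $C_0(u)$ and $\delta(u) > 0$ such that, after rescaling so that $N(t_0) = 1$,
\begin{equation*}
\int_0^{T_0} \int_{|x - x(t)| \le C_0} |u(t,x)|^{p+2}\,dx\,dt \ge \delta.
\end{equation*}
Undoing the rescaling gives a lower bound $\gtrsim_u N(t_0)^{4/p - d + 2}$ on $\int_{t_0}^{t_0+T_0/N(t_0)}\!\int_{|x-x(t)|\le C_0/N(t)} N(t)|u|^{p+2}\,dx\,dt$, which is exactly $\int_{[t_0,t_0+T_0/N(t_0)]} N(t)^{4/p-d+3}\,dt$ up to constants (using local constancy). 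The third step is to cover $I$ greedily by intervals $[s_k, s_k + T_0/N(s_k)]$, apply the unit-interval bound on each, and sum; the hypothesis $t_2 - t_1 \ge N(t_1)^{-1}$ guarantees at least one such patch fits inside $I$.

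The heart of the proof, and the \emph{main obstacle}, is establishing the rescaled unit-interval lower bound. I would argue by contradiction and compactness. If the bound failed, there would be a sequence of almost periodic solutions $u_n$ (with fixed compactness modulus $C(\eta)$ and fixed bound on the critical Sobolev norm) for which the left-hand side tends to zero. By almost periodicity the initial data $(u_n(0), \partial_t u_n(0))$ is precompact in $\dot H^{s_c}_x \times \dot H^{s_c-1}_x$; by the stability theory of Section~\ref{S:stability} the solutions converge in $L^\infty_t(\dot H^{s_c}_x\times \dot H^{s_c-1}_x)$ on $[0,T_0]$ to a limiting almost periodic solution $v$ which must vanish on $[0,T_0]\times\{|x-x_v(t)|\le C_0\}$. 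Choosing $C_0$ larger than the compactness modulus $C(\eta_0)$ of $v$ for some small $\eta_0$, the $\dot H^{s_c}_x \times \dot H^{s_c-1}_x$ norm of $(v,v_t)$ is then forced to be $\le \eta_0$ on $[0,T_0]$, which contradicts the non-triviality lower bound $\|(v(0),v_t(0))\|_{\dot H^{s_c}_x\times \dot H^{s_c-1}_x}\gtrsim_u 1$. This last lower bound is itself a consequence of almost periodicity and local well-posedness: a rescaled orbit limit hitting zero would, by uniqueness, force $u\equiv 0$. The delicate point is threading the quantifiers so that the compactness modulus of the limit $v$ can be controlled in terms of that of $u$; this is where one most feels the need for the quantitative precompactness in Definition~\ref{D:ap} rather than mere precompactness.
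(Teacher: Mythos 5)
Your architecture --- local constancy of $N(t)$, a rescaled unit-interval lower bound proved by compactness and contradiction, then a greedy covering and summation --- is precisely the argument the paper is invoking: its own ``proof'' is a two-sentence reduction to Corollary~3.5 of \cite{OK}, whose proof begins exactly by splitting $I$ into intervals on which $N$ is essentially constant. Your scaling bookkeeping (the exponent $\frac4p-d+2$ per patch, matching $\int N^{4/p-d+3}\,dt$ over an interval of length $T_0/N$) is correct, and the quantifier worry at the end is a non-issue: the $u_n$ are rescalings of the single fixed solution $u$, so they share its compactness modulus $C(\eta)$ verbatim, and the limit inherits the same spatial concentration bounds because these are closed under convergence in $\dot H^{s_c}_x\times\dot H^{s_c-1}_x$.

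One step, however, fails as written. You argue that since $v$ vanishes on $[0,T_0]\times\{|x|\le C_0\}$ and $C_0\ge C(\eta_0)$, the $\dot H^{s_c}_x\times\dot H^{s_c-1}_x$ norm of $(v,v_t)$ is forced to be $\le\eta_0$. This does not follow: the almost periodicity condition controls $\int_{|x|\ge C(\eta_0)}\bigl||\nabla|^{s_c}v\bigr|^2\,dx$, and vanishing of $v$ on a ball says nothing about $|\nabla|^{s_c}v$ there, since $|\nabla|^{s_c}$ is nonlocal; you cannot split the homogeneous Sobolev norm into an inside piece (zero) and an outside piece (small). The repair is the one the paper sets up for exactly this purpose in \eqref{E:u' compact}: the scale-invariant Lebesgue norms \emph{do} localize, so $v(t)=0$ on $\{|x|\le C_0\}$ together with \eqref{E:u' compact} gives $\|v(t)\|_{L_x^{dp/2}}+\|\ntx v(t)\|_{L_x^{dp/(p+2)}}\le\eta_0$ for every $\eta_0>0$ and every $t\in[0,T_0]$, whence $v\equiv0$ on that slab and in particular $(v(0),v_t(0))=0$. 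This contradicts the nontriviality of the limiting data, which is best justified not ``by uniqueness'' but by the small-data theory: if a sequence of rescaled data tended to zero in the critical norm, the stability/small-data theory would force $\|u\|_{S(I)}$ to be finite and small, contradicting $\|u\|_{S(I)}=\infty$ for the solutions to which the proposition is applied. With that substitution your proof is sound.
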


\begin{proof}
Without the additional factor of $N(t)$ on the left (and so also on the right), this is proved in \cite{OK}; see Corollary~3.5 there.
However, the very first step in that proof is to split $I$ into intervals where $N(t)$ is essentially constant.  For this reason,
\eqref{E:some pot E} also follows from the argument presented there.
\end{proof}

%
%
%
%

\section{Stability}\label{S:stability}
In this section we sketch the proof of the stability result for \eqref{nlw}, the only part of the proof of Theorem~\ref{T:enemies} that is not already in the literature.
We note here that the proof works equally well in the defocusing and focusing cases.

\begin{theorem}[Stability]\label{T:stab}
Assume that  $\frac4{d-2}\leq p<\frac4{d-3}$ if $3\leq d\leq 6$ and assume that $\frac4{d-2}\leq p<\frac{d(d-1)-\sqrt{d^2(d-1)^2-16(d+1)^2}}{2(d+1)}$ if $d\geq7$.
Let $I$ a compact time interval containing zero and let $\tilde u$ be an approximate solution to \eqref{nlw} on $I\times \R^d$ in the sense that
$$
\tilde u_{tt} -\Delta \tilde u + F(\tilde u)+ e=0
$$
for some function $e$.  Assume that
\begin{align}
\bigl\|\ntx \tilde u\bigr\|_{L_t^\infty \dot H_x^{s_c-1}(I\times\R^d)}&\le M \label{finite Sob norm}\\
\|\tilde u\|_{S(I)}&\le L \label{finite st norm}
\end{align}
for some positive constants $M$ and $L$.  Let $(u_0, u_1)\in \dot H_x^{s_c}\times \dot H_x^{s_c-1}$ be such that
\begin{align}
\bigl\| (u_0, u_1)-(\tilde u_0, \tilde u_1) \bigr\|_{\dot H_x^{s_c}\times \dot H_x^{s_c-1}}&\le \eps \label{close initially}
\end{align}
and suppose also that the error $e$ obeys
\begin{align}
\bigl\| |\nabla|^{s_c-1/2} e \bigr\|_{L_{t,x}^{\frac{2(d+1)}{d+3}}(I\times\R^d)}&\le \eps \label{error small}
\end{align}
for some $0<\eps<\eps_1=\eps_1(M,L)$. Then, there exists a unique strong solution $u:I\times\R^d\mapsto \R$ to \eqref{nlw} with initial data
$(u_0, u_1)$ at time $t=0$; moreover,
\begin{align}
\|u-\tilde u\|_{S(I)} &\leq C(M,L)\eps^{c} \label{close in st norm}\\
\bigl\| u-\tilde u\bigr\|_{S^{s_c}(I)} &\leq C(M,L)\label{close in Strich}\\
\bigl\| u \bigr\|_{S^{s_c}(I)} &\leq C(M,L) \label{u in Sc},
\end{align}
where $c$ is a positive constant that depends on $d,p,M,$ and $L$.
\end{theorem}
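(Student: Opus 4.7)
The plan is a standard Strichartz-based bootstrap. Introduce $w := u - \tilde u$, which satisfies
\begin{equation*}
w_{tt} - \Delta w = -\bigl[F(\tilde u + w) - F(\tilde u)\bigr] + e, \qquad \|\ntx w(0)\|_{\dot H^{s_c-1}_x} \leq \eps.
\end{equation*}
Applying the inhomogeneous estimate of Lemma~\ref{L:Strichartz} to $w$ on any subinterval $J \subseteq I$ containing zero gives
\begin{equation*}
\|w\|_{S(J)} + \|w\|_{S^{s_c}(J)} \lesssim \eps + \bigl\||\nabla|^{s_c-1/2}[F(\tilde u + w) - F(\tilde u)]\bigr\|_{L^{2(d+1)/(d+3)}_{t,x}(J\times\R^d)}.
\end{equation*}

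To estimate the nonlinear difference, I would invoke Lemma~\ref{L:frac rule} with $s = s_c - \tfrac12$, which lies in $(0,1)$ thanks to the hypothesis $s_c < 3/2$. The H\"older split $\tfrac1q = \tfrac1{q_1} + \tfrac{p}{q_2}$ with $q = \tfrac{2(d+1)}{d+3}$, $q_1 = \tfrac{2(d+1)}{d-1}$, $q_2 = \tfrac{(d+1)p}{2}$ is compatible with the Strichartz output: $\||\nabla|^{s_c-1/2}\cdot\|_{L^{q_1}_{t,x}}$ is precisely one of the norms comprising $\|\cdot\|_{S^{s_c}(J)}$ (the admissible pair $(q_1,q_1)$ with $\gamma = s_c - \tfrac12$), while $\|\cdot\|_{L^{q_2}_{t,x}}$ is the scattering norm itself. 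The chain rule then yields the schematic bound
\begin{equation*}
\|\tilde u\|_{S^{s_c}(J)} \|w\|_{S(J)}^{p} + \|w\|_{S^{s_c}(J)}\bigl(\|\tilde u\|_{S(J)} + \|w\|_{S(J)}\bigr)^{p}.
\end{equation*}

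I would then partition $I$ into finitely many consecutive subintervals $I_1, \dots, I_n$, with $n=n(M,L)$, on each of which $\|\tilde u\|_{S(I_j)} \le \eta$ for a small $\eta=\eta(M,L)$. Running a standard continuity argument on each $I_j$ under the provisional hypothesis $\|w\|_{S(I_j)} \le \eta$, the factor $(\|\tilde u\|_{S(I_j)} + \|w\|_{S(I_j)})^p$ is small enough to let us absorb the $\|w\|_{S^{s_c}(I_j)}$ piece into the left-hand side. This closes the estimate on each $I_j$ in terms of the Strichartz data at $\inf I_j$, the error $\eps$, and the auxiliary quantity $\|\tilde u\|_{S^{s_c}(I_j)}$. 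Iterating from $j=1$ to $n$ and propagating the data at the common endpoints produces \eqref{close in st norm} and \eqref{close in Strich}; then \eqref{u in Sc} follows by the triangle inequality once $\|\tilde u\|_{S^{s_c}(I)} \lesssim_{M,L} 1$ is established by the same mechanism applied to $\tilde u$ itself.

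The main obstacle is that when $p<1$ the nonlinearity is only H\"older continuous in its derivative, so the chain rules must be invoked with care. Lemma~\ref{L:frac rule} is designed to handle \emph{differences} and accommodates $p<1$ directly. However, the auxiliary a~priori bound $\|\tilde u\|_{S^{s_c}(I)} \lesssim 1$ requires controlling $\||\nabla|^{s_c-1/2}F(\tilde u)\|$ itself, and for $p<1$ this is where one must call on Lemma~\ref{L:FDFP}. The parameters $\sigma$, $q_1$, $q_2$ appearing there must simultaneously satisfy the scaling identity and the constraint $(1 - s/(p\sigma))q_1 > 1$, and it is precisely this consistency requirement --- combined with Strichartz-admissibility --- that forces the high-dimensional restriction $s_c < p + 1 - \bigl(\tfrac{1}{d+1} + \tfrac{p}{2}\bigr)$ present in the hypotheses.
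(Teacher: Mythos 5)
Your argument works for $3\leq d\leq 6$ and is essentially the paper's proof in that regime, but it breaks down for $d\geq 7$, and the failure is exactly at the bootstrap step. The difference estimate you derive from Lemma~\ref{L:frac rule} (this is \eqref{non 3} in the paper) controls the nonlinear difference by $\|\tilde u\|_{S^{s_c}}\|w\|_{S}^p$ plus absorbable terms, where $w=u-\tilde u$. When $3\leq d\leq 6$ the hypotheses force $p\geq 1$, so under the provisional hypothesis $\|w\|_{S(I_j)}\leq\eta$ the term $\|w\|_S^p\leq \eta^{p-1}\|w\|_S$ can itself be absorbed (the paper additionally interpolates so that its coefficient is $\delta^{\tilde\theta}M^{1-\tilde\theta}$ rather than $M$, which you also need when $p$ equals or barely exceeds $1$, e.g.\ $d=6$). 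But for $d\geq 7$ one has $p<\frac{4}{d-3}<1$, and the inequality $\|w\|_S\lesssim \eps + A\|w\|_S^p$ with $p<1$ does \emph{not} close to $\|w\|_S\lesssim\eps^c$: the map $x\mapsto \eps+Ax^p$ is concave with infinite slope at the origin, its first fixed point has size $\sim A^{1/(1-p)}$ independently of $\eps$, and the continuity argument therefore yields at best a bound of size $O_M(1)$, not the $\eps$-smallness \eqref{close in st norm} that the compactness machinery requires. No choice of subdivision repairs this, because the obstruction is the sublinear power of $\|w\|$ itself.

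The paper's cure for $d\geq 7$ is structural: write $F(u)-F(\tilde u)=w\int_0^1 F'(\tilde u+\tau w)\,d\tau$ so that $w$ enters \emph{linearly}, apply a fractional product rule, and measure the nonlinearity in a space carrying only $|\nabla|^{p/2}$ derivatives --- fewer than $p$, so that Lemma~\ref{L:FDFP} applies to the H\"older-$p$ function $F'$. This produces \eqref{non 2}, in which $\|w\|_{X}$ appears to the first power multiplied by a factor that becomes small after subdividing, and the bootstrap then closes via the pair \eqref{1}--\eqref{2}. The price is that the resulting $X$ and $Y$ norms are non-admissible, so one needs the exotic Strichartz estimate of Lemma~\ref{L:Strich XY}; the extra hypothesis on $p$ for $d\geq 7$ is precisely what keeps the time exponent in $X$ positive and finite. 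Relatedly, your closing diagnosis misplaces the source of that restriction: the a priori bound $\|\tilde u\|_{S^{s_c}(I)}\lesssim M$ only requires the ordinary fractional chain rule applied to the $C^1$ function $F$ and goes through in all dimensions; it is the difference estimate, not the bound on $F(\tilde u)$, that forces the high-dimensional framework.
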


The general strategy for proving stability for a dispersive equation is by now standard and is reviewed along with references in \cite{ClayNotes}.  Indeed, special cases of Theorem~\ref{T:stab}
have appeared before; see \cite{BCLPZ,kenig-merle:wave, kenig-merle:wave sup}.  Nevertheless, there is some flexibility in the method in terms of which spaces one chooses to work in
and we contend that we provide a simpler treatment of the existing results just mentioned.

As in our previous work on the nonlinear Schr\"odinger equation \cite{ClayNotes, Miel}, we will work in spaces with a small fractional number of derivatives.  We close our bootstrap in the following spaces:
\begin{align*}
\|u\|_{X(I)}:=\bigl\||\nabla|^{s_c-\frac12}u\bigr\|_{L_{t,x}^{\frac{2(d+1)}{d-1}}(I\times\R^d)} \qtq{and}
\|F\|_{Y(I)}:=\bigl\||\nabla|^{s_c-\frac12}F\bigr\|_{L_{t,x}^{\frac{2(d+1)}{d+3}}(I\times\R^d)}
\end{align*}
when $3\leq d\leq 6$ and when $d\geq 7$,
\begin{align*}
\|u\|_{X(I)}&:=\bigl\||\nabla|^{p/2} u\bigr\|_{L_t^{\frac{2p(d+1)}{4(d+1) +p^2(d+1)-pd(d-1)}}L_x^{\frac{2(d+1)}{d-1}}}\\
\|F\|_{Y(I)}&:=\bigl\||\nabla|^{p/2} F\bigr\|_{L_t^{\frac{2p(d+1)}{4(d+1) +p^2(d+1)-p(d^2-d-4)}}L_x^{\frac{2(d+1)}{d+3}}(I\times\R^d)}.
\end{align*}
The additional restriction on $p$ when $d\geq 7$ is to ensure that the time exponent in the definition of $X$ (and so also $Y$) is positive (and finite).

The space $X(I)$ in which the solution will be measured is related to the space $Y(I)$ in which the nonlinearity will be measured via a Strichartz-type inequality:

\begin{lemma}[A Strichartz-type inequality]\label{L:Strich XY}
\begin{align*}
\Bigl\| \int_0^t \tfrac{\sin( (t-s)|\nabla|)}{|\nabla|} F(s)\, ds\Bigr\|_{X(I)}\lesssim \|F\|_{Y(I)}.
\end{align*}
\end{lemma}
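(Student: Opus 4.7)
The claim is a Strichartz-type bound for the Duhamel integral that solves $u_{tt}-\Delta u = F$ with zero initial data.  The first observation is that in both dimension regimes the spatial Lebesgue exponents on the two sides, namely $r_X = \tfrac{2(d+1)}{d-1}$ and $r_Y = \tfrac{2(d+1)}{d+3}$, are H\"older conjugate and coincide with the spatial exponents of the symmetric wave-Strichartz pair; moreover, the derivative order $\gamma$ in $|\nabla|^\gamma$ is the same on both sides.  So the inequality simply asserts the usual inhomogeneous Strichartz estimate for the Duhamel operator specialized to these exponents.

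For $3\leq d\leq 6$, no work is needed beyond reading off the statement of Lemma~\ref{L:Strichartz}: the term $\bigl\||\nabla|^{s_c-1/2}u\bigr\|_{L^{2(d+1)/(d-1)}_{t,x}}$ appearing on the left-hand side of that lemma is precisely $\|u\|_{X(I)}$, and the forcing norm $\bigl\||\nabla|^{s_c-1/2}F\bigr\|_{L^{2(d+1)/(d+3)}_{t,x}}$ appearing on the right is exactly $\|F\|_{Y(I)}$.  Applying Lemma~\ref{L:Strichartz} to the Duhamel solution (with zero data) completes the proof at once.

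For $d\geq 7$, the derivative order has been lowered from $s_c-\tfrac12$ to $p/2$, which forces the time exponents $q_X,q_Y$ to adjust so as to maintain the critical scaling $\tfrac1q+\tfrac dr=\tfrac 2p+\gamma$ with $\gamma=p/2$.  One then invokes the standard two-admissible-pair form of the inhomogeneous wave-Strichartz estimate.  A short computation from the definitions shows that both scaling identities $\tfrac1{q_X}+\tfrac d{r_X}=\tfrac2p+\tfrac p2$ and $\tfrac1{q_Y}+\tfrac d{r_Y}=2+\tfrac2p+\tfrac p2$ hold.  Wave-admissibility of $(q_X,r_X)$ reduces, since $r_X$ is the symmetric-pair spatial endpoint, to $q_X\geq \tfrac{2(d+1)}{d-1}$, which after rearrangement is a quadratic inequality in $p$.

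The main obstacle is the bookkeeping in the high-dimensional case: one has to check that both exponent pairs fall within the admissible range and that the denominators defining $q_X$ and $q_Y$ are positive, so that the associated time exponents are finite.  This last requirement is precisely the origin of the restriction $p<\tfrac{d(d-1)-\sqrt{d^2(d-1)^2-16(d+1)^2}}{2(d+1)}$ in the hypotheses of Theorem~\ref{T:stab}.  Granted admissibility, the inequality follows from the Keel--Tao estimate combined with Christ--Kiselev to handle the retarded integral.
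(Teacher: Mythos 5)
Your treatment of $3\leq d\leq 6$ is fine and agrees with the paper, which notes that in that regime the estimate is literally an instance of Lemma~\ref{L:Strichartz} (the diagonal pair $L^{\frac{2(d+1)}{d-1}}_{t,x}$ with $s_c-\tfrac12$ derivatives on both sides). The gap is in the case $d\geq 7$. You verify wave-admissibility only of the left-hand pair $(q_X,r_X)$, but the ``standard two-admissible-pair form'' of the inhomogeneous Strichartz estimate (Keel--Tao plus Christ--Kiselev) also requires the \emph{dual} of the forcing pair, namely $(q_Y',r_Y')=(q_Y',\tfrac{2(d+1)}{d-1})$, to be admissible, and it is not. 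Indeed, from the definitions one computes $\tfrac1{q_Y}=\tfrac1{q_X}+\tfrac2{d+1}$; admissibility of $(q_Y',r_Y')$ then amounts to $\tfrac1{q_X}\geq\tfrac{d-1}{2(d+1)}$, while admissibility of $(q_X,r_X)$ amounts to $\tfrac1{q_X}\leq\tfrac{d-1}{2(d+1)}$. Both can hold only at the symmetric point, which is the equation $p^2-(d-1)p+4=0$, i.e.\ exactly the case $\tfrac p2=s_c-\tfrac12$. For every $p$ in the range allowed when $d\geq7$ one has $p^2-(d-1)p+4<0$ (at $p=\tfrac4{d-2}$ the quadratic equals $\tfrac{4(6-d)}{(d-2)^2}<0$, and it stays negative on the whole allowed interval), so $(q_X,r_X)$ is strictly below the admissibility threshold and the dual forcing pair strictly fails it. This is precisely why the paper calls the $d\geq 7$ case an \emph{exotic} Strichartz estimate: it is not a consequence of the admissible-pair machinery, and your final step does not go through.

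The paper's actual proof sidesteps admissibility entirely and is uniform in $d$: one uses the frequency-localized dispersive estimate $\bigl\|\tfrac{\sin((t-s)|\nabla|)}{|\nabla|}P_Nf\bigr\|_{L_x^{\frac{2(d+1)}{d-1}}}\lesssim|t-s|^{-\frac{d-1}{d+1}}\|P_Nf\|_{L_x^{\frac{2(d+1)}{d+3}}}$, whose constant is independent of $N$; then the Hardy--Littlewood--Sobolev inequality in the time variable, which requires exactly the relation $\tfrac1{q_Y}=\tfrac1{q_X}+\tfrac2{d+1}$ together with $1<q_Y<q_X<\infty$ --- this is where the positivity/finiteness of the time exponents (your correct observation about the origin of the upper bound on $p$ for $d\geq7$) actually enters; and finally a Littlewood--Paley square-function summation, which is painless because the derivative weight $|\nabla|^{p/2}$ is identical on both sides. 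Your scaling identities are correct and your low-dimensional argument is correct; only the concluding appeal to Keel--Tao/Christ--Kiselev in high dimensions needs to be replaced by this dispersive-plus-HLS argument.
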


\begin{proof}
In dimensions $3\leq d\leq 6$, this is an instance of the usual Strichartz inequality (cf. Lemma~\ref{L:Strichartz}), while for $d\geq 7$ it is one of the standard exotic Strichartz estimates.  The proof is simple
and the same in either case; we review it below.

As noted for example in \cite[\S4.3]{ShatahStruwe}, the wave propagator obeys the frequency-localized dispersive estimate
$$
\Bigl\|\tfrac{\sin( (t-s)|\nabla|)}{|\nabla|}P_N f \Bigr\|_{L_x^{\frac{2(d+1)}{d-1}}} \lesssim |t-s|^{-\frac{d-1}{d+1}}\|P_N f\|_{L_x^{\frac{2(d+1)}{d+3}}},
$$
from which the result follows by the Hardy--Littlewood--Sobolev inequality and elementary Littlewood--Paley theory.
\end{proof}

We also note the following relations between the various spaces involved:

\begin{lemma}\label{L:connections}
With $S$ as in \eqref{E:S defn} and $S^{s_c}$ as in \eqref{Ssc},
\begin{align*}
\|u\|_{X(I)}&\leq \|u\|_{S^{s_c}(I)}\\
\|u\|_{S(I)} &\leq \|u\|_{X(I)}^{ \theta}\|u\|_{S^{s_c}(I)}^{1- \theta} \qtq{for some} 0<\theta<1\\
\|u\|_{X(I)}&\leq \|u\|_{S(I)} ^{ \tilde\theta} \|u\|_{S^{s_c}(I)}^{1-\tilde\theta} \qtq{for some} 0< \tilde\theta<1\\
\|F(u)\|_{Y(I)}&\lesssim \|u\|_{X(I)}\|u\|_{S(I)}^p\lesssim  \|u\|_{X(I)}^{1+ \theta p}\|u\|_{S^{s_c}(I)}^{(1-\theta)p}.
\end{align*}
Our last estimates are for differences of nonlinearities.  For $3\leq d\leq 6$,
\begin{align}\label{non 1}
\|F(u)-&F(\tilde u)\|_{Y(I)}\\
&\lesssim  \|\tilde u\|_{X(I)} \|u-\tilde u\|_{S(I)}^{p} + \|u-\tilde u\|_{X(I)} \bigl\{ \|\tilde u\|_{S(I)}^{p} + \| u- \tilde u\|_{S(I)}^{p} \bigr\} ,\notag
\end{align}
while for $d\geq 7$ we need
\begin{align}\label{non 2}
\|F(u)-&F(\tilde u)\|_{Y(I)}\\
&\lesssim \|u-\tilde u\|_{X(I)} \bigl\{\|u-\tilde u\|_{S^{s_c}(I)}^{p-\frac pd}\|u-\tilde u\|_{S(I)}^{\frac pd}
+\|\tilde u\|_{S^{s_c}(I)}^{p-\frac pd}\|\tilde u\|_{S(I)}^{\frac pd}\bigr\},\notag
\end{align}
as well as the direct analogue of \eqref{non 1}, namely,
\begin{align}
\bigl\| |\nabla|^{s_c-\frac12}&[F(u)-F(\tilde u)]\bigr\|_{L_{t,x}^{\frac{2(d+1)}{d+3}}(I\times\R^d)} \label{non 3}\\
&\lesssim \|\tilde u\|_{S^{s_c}(I)}\| u-\tilde u\|_{S(I)}^p+ \|u-\tilde u\|_{S^{s_c}(I)}\bigl\{ \|\tilde u\|_{S(I)}^p + \|u-\tilde u\|_{S(I)}^p\bigr\}.\notag
\end{align}
\end{lemma}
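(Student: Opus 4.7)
The plan is to treat the eight inequalities in three natural groups, with the only non-trivial new input being the fractional chain rules already recorded in Lemmas~\ref{L:frac rule} and \ref{L:FDFP}; the remaining work is largely exponent bookkeeping.

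For the embeddings and interpolations, I would first prove $\|u\|_{X(I)} \le \|u\|_{S^{s_c}(I)}$ by checking that the spacetime Lebesgue pair underlying $X(I)$ is wave-admissible and that the derivative count matches the scaling relation $\tfrac{1}{q} + \tfrac{d}{r} = \tfrac{2}{p} + \gamma$ in the definition of $\|\cdot\|_{S^{s_c}}$; boundedness of the Riesz transforms then trades $|\nabla|$ on $u$ for $\nabla_{t,x}u$. For the two Gagliardo--Nirenberg-style bounds on $\|u\|_{S(I)}$ and $\|u\|_{X(I)}$, I would fix a specific wave-admissible realization of $\|\cdot\|_{S^{s_c}}$ and compute the parameters $\theta,\tilde\theta\in(0,1)$ so that both the derivative count and the spacetime Lebesgue exponent of the target norm are convex combinations of the endpoints; log-convexity of $L^p$ norms combined with Sobolev embedding then yields the bound.

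Next I would bound $\|F(u)\|_{Y(I)}$. Since $s_c\in(1,3/2)$ and $p<4/(d-3)$, the derivative count $s$ appearing in $Y(I)$ lies in $(0,1)$ in both dimension ranges (with $s=s_c-\tfrac12$ when $3\le d\le 6$ and $s=p/2$ when $d\ge 7$). For $p\ge 1$ the Christ--Weinstein fractional chain rule directly gives $\||\nabla|^s F(u)\|\lesssim \||u|^p\|\cdot\||\nabla|^s u\|$, while for $p<1$ Lemma~\ref{L:FDFP} supplies the analogous estimate. Hölder in time and in space, with exponents dictated by the definitions of $X(I)$, $Y(I)$, $S(I)$, then closes the bound $\|F(u)\|_{Y(I)}\lesssim \|u\|_{X(I)}\|u\|_{S(I)}^p$; substituting the $S$-interpolation inequality gives the iterated form. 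The difference estimates \eqref{non 1} and \eqref{non 3} follow from Lemma~\ref{L:frac rule} applied with $v = u - \tilde u$ and the original $u$ there replaced by $\tilde u$, using the same Hölder pattern and the subadditivity $\|u\|_S^p\lesssim \|\tilde u\|_S^p+\|u-\tilde u\|_S^p$ (valid for any $p>0$). For \eqref{non 2} in $d\ge 7$, the splitting from Lemma~\ref{L:frac rule} produces a factor $\|w\|^p$ (for $w\in\{u,\tilde u\}$) in a Lebesgue norm that is neither $S$ nor $S^{s_c}$; I would interpolate this factor between the two via Sobolev embedding, and the exponents $p-p/d$ and $p/d$ in the statement are exactly those produced by this interpolation.

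The main technical obstacle will be the exponent bookkeeping in the $d\ge 7$ case. The awkward time exponents in the definitions of $X(I)$ and $Y(I)$ are chosen precisely so that the Hölder arithmetic for the $F(u)$ estimate closes, and the upper bound on $p$ in Theorem~\ref{T:stab} is exactly what is needed to keep every appearing time exponent in $(1,\infty)$. Once that check is carried out, the remaining estimates fall into place mechanically.
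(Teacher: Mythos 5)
Your plan for the first four estimates and for \eqref{non 1} and \eqref{non 3} coincides with the paper's proof (Sobolev embedding, interpolation, H\"older, the fractional chain rules, and Lemma~\ref{L:frac rule} applied with $v=u-\tilde u$), and is fine. The genuine gap is in your route to \eqref{non 2}. Lemma~\ref{L:frac rule} with $v=u-\tilde u$ produces the two terms $\||\nabla|^{p/2}\tilde u\|_{q_1}\|u-\tilde u\|_{q_2}^{p}$ and $\||\nabla|^{p/2}(u-\tilde u)\|_{q_1}\|u\|_{q_2}^{p}$, and no interpolation of the power factors between $S$ and $S^{s_c}$ can convert the first of these into something of the form $\|u-\tilde u\|_{X(I)}\{\cdots\}$: it carries only a $p$-th power of the difference, and $p<1$ throughout the admissible range for $d\geq 7$. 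The entire point of \eqref{non 2} is that \emph{every} term on the right is linear in $\|u-\tilde u\|_{X(I)}$; this is what lets the bootstrap \eqref{1} close by absorption. A leftover term of the shape $\|\tilde u\|_{X(I)}\|u-\tilde u\|_{S(I)}^{p}\lesssim \delta^{\tilde\theta}M^{1-\tilde\theta}\|u-\tilde u\|_{S(I)}^{p}$ with $p<1$ is sublinear in the difference, is small only in $\delta$ rather than in $\eps$, and would destroy the conclusion $\|u-\tilde u\|\lesssim\eps^{c}$. This is precisely why the paper uses the Lemma~\ref{L:frac rule} route only for $3\leq d\leq 6$, where $p\geq1$.

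The paper's actual argument for \eqref{non 2} extracts the difference linearly at the outset: by the Fundamental Theorem of Calculus, $F(u)-F(\tilde u)=(u-\tilde u)\int_0^1 F'(\tilde u+\tau[u-\tilde u])\,d\tau$; then the fractional product rule and Sobolev embedding give
$\|F(u)-F(\tilde u)\|_{Y(I)}\lesssim\|u-\tilde u\|_{X(I)}\sup_{\tau}\| |\nabla|^{p/2}F'(\tilde u+\tau[u-\tilde u])\|_{L_t^{(d+1)/2}L_x^{2d(d+1)/(4d+p(d+1))}}$,
and the last factor is estimated by Lemma~\ref{L:FDFP} --- applicable because $F'$ is H\"older continuous of order $p<1$ --- with the choice $\sigma=\frac{d}{2(d-1)}$. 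Note also that the exponents $p/d$ and $p-p/d$ are not the output of an interpolation, as you suggest, but of the structure of Lemma~\ref{L:FDFP} itself: with $s=p/2$ and this $\sigma$ one has $s/\sigma=p-p/d$, so the lemma yields a factor $\||w|^{p/d}\|\cdot\||\nabla|^{\sigma}w\|^{p-p/d}$, whose two pieces are then placed in $S(I)$ and $S^{s_c}(I)$ respectively.
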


\begin{proof}
The first four estimates follow from straightforward applications of Sobolev embedding, interpolation, H\"older's inequality, together with the fractional chain rule Lemma~\ref{L:FDFP}.

The inequalities \eqref{non 1} and \eqref{non 3} are consequences of Lemma~\ref{L:frac rule} and H\"older's inequality.

To derive \eqref{non 2}, one first uses the Fundamental Theorem of Calculus to write
$$
F(u)-F(\tilde u) = (u-\tilde u)\int_0^1 F'( \tilde u + \tau [u-\tilde u])\, d\tau
$$
and thence, via the fractional product rule (and Sobolev embedding),
$$
\|F(u)-F(\tilde u)\|_{Y(I)}
\lesssim \|u-\tilde u\|_{X(I)} \sup_{\tau\in[0,1]} \big\| |\nabla|^{p/2} F'( \tilde u + \tau [u-\tilde u]) \bigr\|_{L_t^{\frac{d+1}2} L_x^{\frac{2d(d+1)}{4d+p(d+1)}}}.
$$
The estimate \eqref{non 2} now follows from Lemma~\ref{L:FDFP}, in which we take $\sigma=\frac d{2(d-1)}$.
\end{proof}

With these preliminaries out of the way, we are ready to resume the proof of Theorem~\ref{T:stab}.   For the remainder of the proof, all spacetime norms are over $I\times\R^d$.
By standard iterative arguments (and subdividing the original time interval), it suffices to prove the claim with hypothesis \eqref{finite st norm} replaced by
\begin{align} \label{small st norm}
\|\tilde u\|_{S(I)}&\le \delta,
\end{align}
where $\delta$ is a small constant allowed to depend on $M$.  By Lemma~\ref{L:connections}, we see that one can transfer bounds (and smallness) between the $X$ and $S$ norms;
thus, it suffices to prove the claim with the norm $S$ replaced by the norm $X$ in \eqref{close in st norm}.

Next, an application of Lemma~\ref{L:Strichartz} along with \eqref{finite Sob norm}, \eqref{error small}, and \eqref{small st norm} yields
\begin{align*}
\|\tilde u\|_{S^{s_c}(I)}
&\lesssim \bigl\|\ntx \tilde u\bigr\|_{L_t^\infty \dot H_x^{s_c-1}} +\bigl\| |\nabla|^{s_c-\frac12} e \bigr\|_{L_{t,x}^{\frac{2(d+1)}{d+3}}}
+\bigl\| |\nabla|^{s_c-\frac12}F(\tilde u)\bigr\|_{L_{t,x}^{\frac{2(d+1)}{d+3}}}\\
&\lesssim M + \eps + \delta^p\|\tilde u\|_{S^{s_c}(I)}.
\end{align*}
Hence, for $\delta$ sufficiently small depending on $d, p$ and $\eps$ small enough depending on $M$,
\begin{align}\label{tilde u in Ssc}
\|\tilde u\|_{S^{s_c}(I)}\lesssim M.
\end{align}

We first explain how to complete the argument in the case when $3\leq d\leq 6$.  Note that in this case, the power $p$ under discussion satisfies $p\geq 1$.
An application of the Strichartz inequality Lemma~\ref{L:Strichartz} along with Lemma~\ref{L:connections}, \eqref{close initially}, \eqref{error small}, \eqref{small st norm},
and \eqref{tilde u in Ssc} yields
\begin{align*}
\|u-\tilde u\|_{S(I)} &+ \|u-\tilde u\|_{X(I)}\\
&\lesssim \bigl\| (u_0, u_1)- (\tilde u_0,\tilde u_1) \bigr\|_{\dot H_x^{s_c}\times \dot H_x^{s_c-1}} + \bigl\| |\nabla|^{s_c-\frac12} e \bigr\|_{L_{t,x}^{\frac{2(d+1)}{d+3}}}\\
&\quad+\bigl\| |\nabla|^{s_c-\frac12}[F(u)-F(\tilde u)]\bigr\|_{L_{t,x}^{\frac{2(d+1)}{d+3}}}\\
&\lesssim \eps +\|\tilde u\|_{X(I)} \|u-\tilde u\|_{S(I)}^{p} + \|u-\tilde u\|_{X(I)} \bigl\{ \|\tilde u\|_{S(I)}^{p} + \| u- \tilde u\|_{S(I)}^{p} \bigr\}\\
&\lesssim \eps + \delta^{\tilde \theta} M^{1-\tilde \theta} \|u-\tilde u\|_{S(I)}^{p} +\delta^p \|u-\tilde u\|_{X(I)} +\|u-\tilde u\|_{X(I)}\| u- \tilde u\|_{S(I)}^{p}.
\end{align*}
Taking $\delta$ small depending on $d,p,M$ and recalling that $p\geq 1$, a bootstrap argument yields
$$
\|u-\tilde u\|_{S(I)} + \|u-\tilde u\|_{X(I)}\lesssim \eps,
$$
which settles \eqref{close in st norm}.  Another application of the Strichartz inequality as above proves \eqref{close in Strich}.  The estimate \eqref{u in Sc} follows from the
triangle inequality, \eqref{close in Strich}, and \eqref{tilde u in Ssc}.

We now consider the case when $d\geq 7$.  Arguing as in the main estimate for the lower dimensional case, but now using Lemma~\ref{L:Strich XY} as well, we obtain
\begin{align*}
\|u-&\tilde u\|_{X(I)}\\
&\lesssim \bigl\| (u_0, u_1)- (\tilde u_0,\tilde u_1) \bigr\|_{\dot H_x^{s_c}\times \dot H_x^{s_c-1}} + \bigl\| |\nabla|^{s_c-\frac12} e \bigr\|_{L_{t,x}^{\frac{2(d+1)}{d+3}}} +\|F(u)-F(\tilde u)\|_{Y(I)}\\
&\lesssim \eps + \|u-\tilde u\|_{X(I)} \bigl\{\|u-\tilde u\|_{S^{s_c}(I)}^{p- \frac pd}\|u-\tilde u\|_{S(I)}^{\frac pd}
+\|\tilde u\|_{S^{s_c}(I)}^{p- \frac pd}\|\tilde u\|_{S(I)}^{\frac pd}\bigr\}\\
&\lesssim \eps + \|u-\tilde u\|_{X(I)}^{1+\theta \frac pd} \|u-\tilde u\|_{S^{s_c}(I)}^{p-\theta \frac{p}d}
+\delta^{\frac pd} M^{p- \frac{p}d}\|u-\tilde u\|_{X(I)},
\end{align*}
which for $\delta$ small (depending on $d, p, M$) yields
\begin{align}\label{1}
\|u-\tilde u\|_{X(I)}\lesssim \eps + \|u-\tilde u\|_{X(I)}^{1+\theta \frac pd} \|u-\tilde u\|_{S^{s_c}(I)}^{p-\theta \frac{p}d}.
\end{align}
Another application of the same considerations (using \eqref{tilde u in Ssc}) yields
\begin{align*}
\bigl\|u-&\tilde u\bigr\|_{S^{s_c}(I)}\\
&\lesssim  \bigl\| (u_0, u_1)- (\tilde u_0,\tilde u_1) \bigr\|_{\dot H_x^{s_c}\times \dot H_x^{s_c-1}}+ \bigl\| |\nabla|^{s_c-\frac12} e \bigr\|_{L_{t,x}^{\frac{2(d+1)}{d+3}}} \\
&\quad +\bigl\| |\nabla|^{s_c-\frac12}[F(u)-F(\tilde u)]\bigr\|_{L_{t,x}^{\frac{2(d+1)}{d+3}}}\\
&\lesssim \eps + \|\tilde u\|_{S^{s_c}(I)}\| u-\tilde u\|_{S(I)}^p +  \|u-\tilde u\|_{S^{s_c}(I)}\bigl\{\|u-\tilde u\|_{S(I)}^p+\|\tilde u\|_{S(I)}^p\bigr\}\\
&\lesssim \eps + \Bigl[M \|u-\tilde u\|_{S^{s_c}(I)}^{(1-\theta) p}+ \|u-\tilde u\|_{S^{s_c}(I)}^{1+(1-\theta) p}\Bigr]\| u-\tilde u\|_{X(I)}^{\theta p}  +\delta^p\|u-\tilde u\|_{S^{s_c}(I)},
\end{align*}
which for $\delta$ small (depending on $d, p$) gives
\begin{align}\label{2}
\|u-\tilde u\|_{S^{s_c}(I)}\lesssim \eps + \Bigl[M \|u-\tilde u\|_{S^{s_c}(I)}^{(1-\theta) p}+ \|u-\tilde u\|_{S^{s_c}(I)}^{1+(1-\theta) p}\Bigr]\| u-\tilde u\|_{X(I)}^{\theta p}.
\end{align}

A simple bootstrap argument using \eqref{1} and \eqref{2} yields \eqref{close in st norm} and \eqref{close in Strich}.  The claim \eqref{u in Sc} follows by the triangle inequality, \eqref{close in Strich}, and \eqref{tilde u in Ssc}.
This completes the proof of the theorem.

%
%
%
%

\section{The global solutions}\label{S:no soliton}

In this section we preclude the soliton-like and frequency-cascade solutions described in Theorem~\ref{T:enemies}.

\begin{theorem}[Absence of solitons and frequency-cascades]
There are no soliton-like or frequency-cascade solutions to \eqref{nlw} in the sense of Theorem~\ref{T:enemies}.
\end{theorem}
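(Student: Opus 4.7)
The plan is to combine the space-localized Morawetz inequality (Lemma~\ref{L:Morawetz}) with the potential energy concentration estimate (Proposition~\ref{P:pot conc}) on long time intervals, exploiting the fact that in both scenarios $N(t)\geq 1$ and (by radial symmetry) we may take the spatial center to be the origin. Importantly, the hypothesis $p<\frac{4}{d-3}$ is equivalent both to $\frac{4}{p}-(d-3)>0$ (which makes the concentration lower bound useful) and to $d-2-\frac{4}{p}<1$ (which makes the Morawetz upper bound strictly sublinear in $|I|$).

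Concretely, fix a putative global solution $u$ as in Scenario II or III and let $I=[0,T]$ with $T$ large. First, by Proposition~\ref{P:pot conc} together with $N(t)\geq 1$ and $\frac{4}{p}-(d-3)>0$,
\begin{equation*}
\int_I\!\!\int_{|x|\leq C/N(t)} N(t)\,|u(t,x)|^{p+2}\,dx\,dt
\;\gtrsim_u\;\int_I N(t)^{\frac{4}{p}-(d-3)}\,dt
\;\geq\; T.
\end{equation*}
Next, on the region $\{|x|\leq C/N(t)\}$ one has $N(t)\leq C/|x|$; moreover, since $N(t)\geq 1$ we have $C/N(t)\leq C\leq T$ once $T\geq C$, so this region is contained in $\{|x|\leq T\}=\{|x|\leq|I|\}$. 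Therefore
\begin{equation*}
\int_I\!\!\int_{|x|\leq C/N(t)} N(t)\,|u|^{p+2}\,dx\,dt
\;\lesssim\; \int_I\!\!\int_{|x|\leq |I|} \frac{|u(t,x)|^{p+2}}{|x|}\,dx\,dt.
\end{equation*}

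Finally, Lemma~\ref{L:Morawetz}, combined with the $L^\infty_t(\dot H^{s_c}_x\times\dot H^{s_c-1}_x)$ bound guaranteed by almost periodicity, yields
\begin{equation*}
\int_I\!\!\int_{|x|\leq|I|}\frac{|u(t,x)|^{p+2}}{|x|}\,dx\,dt\;\lesssim_u\; |I|^{d-2-\frac{4}{p}} = T^{d-2-\frac{4}{p}}.
\end{equation*}
Chaining these three inequalities gives $T\lesssim_u T^{d-2-\frac{4}{p}}$ for all sufficiently large $T$. Since $p<\frac{4}{d-3}$ forces $d-2-\frac{4}{p}<1$, letting $T\to\infty$ produces the desired contradiction. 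This argument simultaneously rules out both Scenarios II and III; the only property used is $N(t)\geq 1$, which is common to both.

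The main obstacle is essentially bookkeeping: one must verify that the narrow region $\{|x|\leq C/N(t)\}$ appearing in the lower bound sits inside the wider Morawetz region $\{|x|\leq |I|\}$, and that the weight mismatch (an $N(t)$ on one side versus $1/|x|$ on the other) is resolved favorably by the pointwise inequality $N(t)\leq C/|x|$ valid on the small region. Note that the range $p<\frac{4}{d-3}$ is sharp for this proof, as it is exactly what makes both the gain in the concentration estimate and the loss in the Morawetz estimate point in the right direction; this is precisely the obstruction flagged in the introduction.
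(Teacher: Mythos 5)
Your argument is correct and is essentially identical to the paper's proof: both chain the potential-energy concentration lower bound (using $N(t)\geq 1$ and $\frac4p-(d-3)>0$) through the pointwise comparison $N(t)\lesssim 1/|x|$ on $\{|x|\leq C/N(t)\}\subseteq\{|x|\leq T\}$ into the space-localized Morawetz upper bound $T^{d-2-\frac4p}$, and conclude from $d-2-\frac4p<1$ by taking $T$ large.
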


\begin{proof}
We argue by contradiction.  Assume there exists a solution $u:\R\times\R^d\to \R$ that is either a soliton-like or a frequency-cascade solution in the sense
of Theorem~\ref{T:enemies}.  We will show these scenarios are inconsistent with the Morawetz inequality.

By Lemma~\ref{L:Morawetz},
\begin{align}\label{Mor}
\int_0^T \int_{|x|\leq T} \frac{|u(t,x)|^{p+2}}{|x|}\, dx\, dt\lesssim_u T^{d-2-\frac4p},
\end{align}
for any $T>0$.  On the other hand, by Proposition~\ref{P:pot conc} we have concentration of potential energy, that is,
there exists $C=C(u)$ so that for any $T\geq N(0)^{-1}\geq 1$,
$$
\int_0^T \int_{|x|\leq C/N(t)}N(t)  |u(t,x)|^{p+2}\, dx\, dt \gtrsim_u \int_0^T N(t)^{\frac4p-(d-3)}\, dt.
$$
Using the fact that $N(t)\geq 1$ and $p<\frac4{d-3}$, we obtain that for $T\geq 1$,
\begin{align*}
\text{LHS\eqref{Mor}}
& \geq \int_0^T \int_{|x|\leq C/N(t)} \frac{|u(t,x)|^{p+2}}{|x|}\, dx\, dt
\gtrsim_u \int_0^T N(t)^{\frac4p-(d-3)}\, dt
\gtrsim_u T.
\end{align*}
Choosing $T$ sufficiently large depending on $u$ and recalling once again that $p<\frac4{d-3}$ (and hence $\frac4p-(d-2)<1$), we derive a contradiction to \eqref{Mor}.
\end{proof}

%
%
%
%

\section{The finite-time blowup solution}\label{S:ftb}

In this section, we preclude the finite-time blowup scenario described in Theorem~\ref{T:enemies} by showing that such solutions
are inconsistent with the conservation of energy.

\begin{theorem}[Absence of finite-time blowup solutions]
There are no finite-time blowup solutions to \eqref{nlw} in the sense of Theorem~\ref{T:enemies}.
\end{theorem}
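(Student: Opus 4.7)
I will argue by contradiction; by time-reversal symmetry of \eqref{nlw} I may assume $T^*:=\sup I<\infty$, and the plan is to show the resulting $u$ must vanish identically, contradicting $\|u\|_{S(I)}=\infty$. The first observation is that $N(t)\to\infty$ as $t\nearrow T^*$: otherwise one could find a sequence $t_n\to T^*$ with $N(t_n)$ bounded, and the almost-periodicity of Definition~\ref{D:ap} would provide uniform control of the free evolution from time $t_n$ on an interval of length comparable to $N(t_n)^{-1}$ in the Strichartz norms relevant to Theorem~\ref{T:stab}; that theorem would then extend the solution past $T^*$, contradicting maximality of $I$.

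The core step is to show that $(u(t),u_t(t))$ is supported in $\overline{B(0,T^*-t)}$ for every $t\in I$. I would fix $t\in I$ and $\eta>0$. For any $t_0\in(t,T^*)$, Definition~\ref{D:ap} provides a radius $R_0=C(\eta)/N(t_0)$ such that the data $(u(t_0),u_t(t_0))$ restricted to $\{|x|\ge R_0\}$ has small $\dot H^{s_c}_x\times\dot H^{s_c-1}_x$-norm. Truncating this piece outside a slightly larger ball, evolving it backwards to time $t$ via the linear wave flow (which obeys classical finite speed of propagation), and using Theorem~\ref{T:stab} to compare with the true nonlinear evolution, the contribution of this ``far-field'' data to $(u(t),u_t(t))$ is seen to be confined to $\{|x|\ge R_0-(t_0-t)\}$ up to an $O(\eta^{1/2})$ error in $\dot H^{s_c}_x\times\dot H^{s_c-1}_x$. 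Letting $t_0\nearrow T^*$ (so that $R_0\to 0$ by the first step while $t_0-t\to T^*-t$) and then $\eta\to 0$ yields the support statement.

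With compact support in hand, Sobolev embedding combined with H\"older's inequality on $B(0,T^*-t)$ converts the uniform $\dot H^{s_c}_x\times\dot H^{s_c-1}_x$ bound on $(u(t),u_t(t))$ into a bound on the classical energy: a direct computation using $s_c=\tfrac d2-\tfrac2p$ shows that each of the three terms in $E(u(t))$ scales like $(T^*-t)^{2(s_c-1)}$, the positivity of this exponent being precisely the energy-supercritical condition $s_c>1$. Thus $E(u(t))\to 0$ as $t\nearrow T^*$. Since $(u,u_t)$ is now known to live in $\dot H^1_x\times L^2_x$, the standard conservation of energy applies and forces $E\equiv 0$; this in turn yields $u\equiv 0$, contradicting $\|u\|_{S(I)}=\infty$.

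The main obstacle is the support claim of the second paragraph. Finite speed of propagation in its cleanest form is formulated for energy-class or smoother solutions, whereas here the a priori regularity is only $\dot H^{s_c}_x\times\dot H^{s_c-1}_x$ with $s_c>1$, and the relevant ``smallness'' of the far-field data is measured in exactly this scale-critical space; one must therefore combine the spatial compactness provided by almost-periodicity with finite speed of propagation through a truncation-plus-stability argument that invokes Theorem~\ref{T:stab} to handle the nonlinear evolution of the truncated piece.
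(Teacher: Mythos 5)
Your overall skeleton matches the paper's: show $N(t)\to\infty$ at the blowup time, deduce that the support of $(u(t),u_t(t))$ shrinks linearly to a point, convert the uniform critical bound into $E(u(t))\lesssim_u (T^*-t)^{d-2-4/p}\to 0$ (your exponent $2(s_c-1)$ is indeed $d-2-\frac4p$), and invoke energy conservation to force $u\equiv 0$. The gap is in the middle step, the support property. You propose truncating the data at a time $t_0$ near $T^*$ and comparing the backward evolution of the bulk with $u$ via Theorem~\ref{T:stab}. But the constants $\eps_1(M,L)$ and $C(M,L)$ there depend on $L\geq\|u\|_{S([t,t_0])}$ (or on the spacetime norm of whichever function serves as the reference solution), and $\|u\|_{S([t,t_0])}\to\infty$ as $t_0\nearrow T^*$ --- that is precisely what blowup means. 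So your ``$O(\eta^{1/2})$ error'' is really $C(M,L(t_0))\,\eta^{c}$ with $C\to\infty$ in the very limit $t_0\nearrow T^*$ you must take; shrinking $\eta$ to compensate inflates $C(\eta)/N(t_0)$, and the double limit does not close. Taking the truncated solution as the reference instead is no better, since you have no a priori spacetime bound on it. Two further (lesser) obstacles: Definition~\ref{D:ap} controls $\int_{|x|\geq R_0}\bigl||\nabla|^{s_c}u\bigr|^2$, not the $\dot H^{s_c}_x$ norm of a truncated piece, so the cutoff step requires commutator estimates for the nonlocal operator $|\nabla|^{s_c}$; and \eqref{close in Strich} gives only boundedness, not smallness, of the difference in $S^{s_c}$, so recovering smallness in $L^\infty_t\dot H^{s_c}_x$ at the fixed time $t$ needs an additional persistence-of-smallness argument. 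Also, the far-field contribution is not ``confined to $\{|x|\geq R_0-(t_0-t)\}$'' (that set is eventually everything); the correct geometry is that the \emph{bulk} evolution is confined to $\{|x|\leq R_0+(t_0-t)\}$ while the far-field contribution is merely small everywhere.

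The paper sidesteps all of this with a monotonicity argument that never mentions spacetime norms: for the defocusing equation the energy flux through the lateral boundary of a light cone has a favorable sign, so the energy on the annulus $\{t+\eps\leq |x|\leq \eps^{-1}-t\}$ does not decrease as $t$ approaches the blowup time. Almost periodicity together with $N(t)\to\infty$ shows, via \eqref{E:u' compact} and H\"older, that this annular energy tends to zero; by monotonicity it therefore vanishes identically, which yields the support statement for every $t$ at once. If you want to repair your argument, replace the truncation-plus-stability comparison by this local energy inequality.
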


\begin{proof}
We argue by contradiction.  Assume there exists a solution $u:I\times\R^d\to \R$ that is a finite-time blowup solution
in the sense of Theorem~\ref{T:enemies}.  By the time-reversal and time-translation symmetries, we may assume that
the solution blows up as $t\searrow 0=\inf I$.

First note that $N(t)\to\infty$ as $t\to0$, for otherwise any subsequential limit of the functions $N(t)^{-\frac2p} u(t, N(t)^{-1}x)$ would
blow up instantaneously, in contradiction of the local theory.

Next we show that
\begin{align}\label{bounded support 1}
\supp u(t) \cup \supp u_t(t) \subseteq B(0, t) \quad  \text{for all} \quad t\in I,
\end{align}
where $B(0,t)$ denotes the closed ball in $\R^d$ centered at the origin of radius $t$.  Indeed, it suffices to show that
\begin{align}\label{bounded support 2}
\lim_{t\to 0}\int_{t +\eps \leq |x|\leq \eps^{-1} - t} \tfrac12 \bigl| \ntx u(t,x)\bigr|^2 + \tfrac1{p+2} |u(t,x)|^{p+2} \, dx = 0
\quad \text{for all} \quad \eps>0,
\end{align}
because the energy on the annulus $\{x:t +\eps \leq |x|\leq \eps^{-1} - t\}$ is finite and does not decrease as $t\to0$.  To obtain \eqref{bounded support 2}, fix $\eps>0$.
As $N(t)\to\infty$ as $t\to0$, we deduce that for all $\eta>0$ there exists $t_0=t_0(\eps, \eta)$ such that for $0<t<t_0$ we have
$$
\{x\in \R^d:\ t +\eps \leq |x|\leq \eps^{-1} - t\} \subseteq \{x\in \R^d:\ |x|\geq C(\eta)/N(t)\},
$$
where $C(\eta)$ is as in \eqref{E:u' compact}.  Thus by H\"older's inequality and \eqref{E:u' compact},
\begin{align*}
&\int_{t +\eps \leq |x|\leq \eps^{-1} - t}   \tfrac12 \bigl| \ntx u(t,x)\bigr|^2 + \tfrac1{p+2} |u(t,x)|^{p+2} \, dx\\
&\quad\lesssim \eps^{\frac4{p}-(d-2)} \Bigl[\bigl\|\ntx u(t)\bigr\|_{L_x^{\frac{dp}{p+2}}(\{|x|\geq C(\eta)/N(t)\})}^2
    + \|u(t)\|_{L_x^{\frac{dp}2}(\{|x|\geq C(\eta)/N(t)\})}^{p+2}\Bigr]\\
&\quad \lesssim \eps^{\frac4{p}-(d-2)} \eta^2
\end{align*}
for all $0<t<t_0$.  As $\eta$ can be made arbitrarily small, this proves \eqref{bounded support 2} and hence \eqref{bounded support 1}.

To continue, by \eqref{bounded support 1}, H\"older's inequality, and Sobolev embedding we obtain
\begin{align*}
E(u(t))
&= \int_{B(0,t)} \Bigl(\tfrac12 |\ntx u(t,x)|^2 + \tfrac1{p+2} |u(t,x)|^{p+2}\Bigr) \, dx\\
&\lesssim \Bigl(\|\ntx u(t)\|_{L_x^{\frac{dp}{p+2}}}^2 + \|u(t)\|_{ L_x^{\frac{dp}2}}^{p+2}\Bigr)t^{d-2-\frac4p}\\
& \lesssim_u t^{d-2-\frac4p}
\end{align*}
for all $t\in I$.  In particular, the energy of the solution is finite and converges to zero as the time $t$ approaches
the blowup time $0$. Invoking the conservation of energy, we deduce that $u\equiv 0$.  This contradicts the fact that
$u$ is a blowup solution.
\end{proof}

%
%
%
%

\end{document}